\newtheorem{theorem}{\textbf{Theorem}}[section]
\newtheorem{lemma}{\textbf{Lemma}}[section]
\newtheorem{remark}{\textbf{Remark}}[section]
\newtheorem{example}{\textbf{Example}}[section]
\title{Linear Relaxation Schemes with Asymptotically Compatible Energy Law for Time-Fractional Phase-Field Models}
\author{Hui Yu\thanks{School of Mathematics and Physics, University of Science and Technology Beijing, Beijing 100083, China (huiyu@xs.ustb.edu.cn).} \,,
Zhaoyang Wang\thanks{Corresponding author. School of Mathematical Sciences, Laboratory of Mathematics and Complex Systems, MOE, Beijing Normal University, Beijing 100875, China; \ Research Center for Mathematics, Beijing Normal University, Zhuhai, Guangdong 519088, China (zhaoyang584520@163.com).} \,,
Ping Lin\thanks{Corresponding author. Division of Mathematics, University of Dundee, Dundee DD1 4HN,United Kingdom (p.lin@dundee.ac.uk).}  }
\affil{}
\date{}
\begin{document}

\maketitle
\begin{abstract}
In this paper, we propose a variable time-step  linear relaxation scheme for time-fractional phase-field equations with a free energy density in general polynomial form. The $L1^{+}$-CN formula is used to discretize the fractional derivative, and an auxiliary variable is introduced to approximate the nonlinear term by directly solving algebraic equations rather than differential-algebraic equations as in the invariant energy quadratization (IEQ) and the scalar auxiliary variable (SAV) approaches. The developed semi-discrete scheme is second-order accurate in time, and the inconsistency between the auxiliary and the original variables does not deteriorate over time. Furthermore, we take the time-fractional volume-conserved Allen-Cahn equation, the time-fractional Cahn-Hilliard equation, and the time-fractional Swift-Hohenberg equation as examples to demonstrate that the constructed schemes are energy stable and that the discrete energy dissipation law is asymptotically compatible with the classical one when the fractional-order parameter $\alpha\rightarrow 1^{-}$. Several numerical examples demonstrate the effectiveness of the proposed scheme. In particular, numerical results confirm that the auxiliary variable remains well aligned with the original variable, and the error between them does not continue to increase over time before the system reaches steady state.

\medskip
\noindent{\bf Keywords}: Time-fractional phase-field models, Linear relaxation scheme, Stability, Asymptotic preserving

\medskip
\end{abstract}

\section{Introduction}
In recent years, the phase-field approach has been one of the most popular methodologies to simulate multiphase flows and phase‑separation processes. The phase-field model uses an order parameter $\phi$ to describe the interface of two fluids or the phase transition between two phases \cite{allen1979microscopic}, and $\phi$ is also employed to characterize the volume or mass fraction of a substance \cite{guo2015thermodynamically, guo2021diffuse}. It is regarded as a branch of gradient flows that evolve toward minimizing the free energy $E(\phi)$, and has been widely applied to materials science (See e.g. \cite{guo2020modeling}), multiphase flow (See e.g. \cite{guo2015thermodynamically, yang2025two}), and tumor growth modeling (See e.g. \cite{wang2025stability}).

The time-fractional phase-field model \cite{tang2019energy, liu2018time, liao2021energy} has recently attracted extensive interest due to its ability to capture memory effects and long-time scale behavior in phase separation processes. It replaces the classical integer‐order time derivative with a Caputo fractional derivative of order $0<\alpha<1$. In this work, we consider the time-fractional phase-field model:
\begin{equation}\label{the1-1}
 \begin{aligned}
 &\frac{\partial ^\alpha}{\partial t^\alpha} \phi= - M\mathcal{G} \mu, \\
 &\mu = \frac{\delta E}{\delta \phi}=\mathcal{L}\phi+F'(\phi),
 \end{aligned}
\end{equation}
where $M>0$ is the mobility coefficient and $\mathcal{G}$ is a semi-positive definite mobility operator.
The free energy is defined as
\begin{equation}
    E[\phi]=\int_\Omega\left(\frac{1}{2} \mathcal{L\phi}\cdot \phi + F(\phi)\right) d \mathbf{x}.
\end{equation}
The Caputo fractional derivative of $u$ is given by
\begin{equation}\nonumber
 \begin{aligned}
\frac{\partial ^\alpha}{\partial t^\alpha}u := \mathcal{I}_t^{1-\alpha} u'(t),
    \end{aligned}
\end{equation}
where $\mathcal{I}_t^{1-\alpha}$ is the Riemann-Liouville fractional integration operator defined as
\begin{equation}
  \mathcal{I}_t^{ \alpha} u  := \int_0^t \omega_\alpha(t-s)u(s)\, ds,\ \text{with}\ \omega_\alpha(t):=\frac{t^{\alpha-1}}{\Gamma(\alpha)} .
\end{equation}

Similar to the integer-order ($\alpha=1$) phase field equations, the time-fractional phase-field equations also follow certain energy laws. Tang \cite{tang2019energy} et al. proved that the energy is bounded above by the initial energy $E[t]\leq E[0]$ (for $t\in(0, T)$). To be compatible with the classical model, Liao et al. \cite{liao2021energy,jicahnhilliarden} proposed the variational energy dissipation law. That is, the variational energy functional $\mathcal{E}_\alpha[\phi]:=E[\phi] + \frac{M}{2}\mathcal{I}_t^\alpha \left\| \mu \right\|^2$ satisfies
\begin{equation}\label{variation_energy}
    \frac{d \mathcal{E}_\alpha[\phi] }{dt} + \frac{M}{2}\omega_\alpha(t)\left\| \mu \right\|^2 \leq 0,
\end{equation}
for the time-fractional Allen-Cahn equation. The variational energy functional $\mathcal{E}_\alpha [\phi]:=E[\phi] + \frac{M}{2}\mathcal{I}_t^\alpha \left\| \nabla\mu \right\|^2$ satisfies
\begin{equation}\label{variation_energy2}
    \frac{d \mathcal{E}_\alpha [\phi]}{dt} + \frac{M}{2}\omega_\alpha(t)\left\| \nabla\mu \right\|^2 \leq 0,
\end{equation}
for the time-fractional Cahn-Hilliard equation.

However, the variational energy $\mathcal{ {E}}_\alpha[\phi]$ lacks asymptotic compatibility with the Ginzburg-Landau energy $E(\phi)$ \cite{liao2025asymptotically,qi2024unified}. Recently, Quan et al. \cite{quan2022decreasing} defined a new modified energy
\begin{equation}\label{modified_energy}
\tilde{E}_\alpha[\phi] := E[\phi] + \frac{\omega_{1-\alpha}(t)}{2M} \| \phi(t) -  \phi(0)\| ^2 - \frac{1}{2M} \int_0^t \omega_{-\alpha}(t - s) \| \phi(t) -  \phi(s)\| ^2 \, ds,
\end{equation}
and preserve the following energy dissipation law
\begin{equation}\label{modified_energy_dissipation}
\frac{d\tilde{E}_\alpha [\phi]}{dt} - \frac{\omega_{-\alpha}(t)}{2M} \|\phi(t) - \phi(0)\| ^2 + \frac{1}{2M} \int_0^t \omega_{-\alpha-1}(t - s) \|\phi(t) - \phi(s)\| ^2 \, ds = 0,
\end{equation}
where
\begin{equation}
    \omega_{-\alpha}(t) = \frac{-\alpha}{t^{1+\alpha}\Gamma(1-\alpha)}, \quad \omega_{\alpha-1}(t) = \frac{\alpha(\alpha+1)}{t^{2+\alpha}\Gamma(1-\alpha)}.
\end{equation}
We can see that $\tilde{E}_\alpha[\phi] \rightarrow E[\phi]$ as $\alpha \rightarrow1^{-}$, which indicates that the new modified energy $\tilde{E}_\alpha[\phi]$ is asymptotically compatible with the Ginzburg–Landau energy $E[\phi]$.

To solve fractional-order phase-field models, many numerical methods proposed for integer-order phase-field models can be applied, such as the invariant energy quadratization (IEQ) method \cite{guillen2013linear, yang2016linear, yang2017numerical} and the scalar auxiliary variable (SAV) method \cite{shen2019new}. By combining the recently popular SAV method \cite{shen2019new}, Yu et al. \cite{yu2023exponential} constructed a linear, second-order accurate in time and energy bounded numerical scheme for the time-fractional Cahn-Hilliard equation. Qi and Zhao \cite{qi2024unified} constructed a CN-SAV scheme for the fractional gradient flow problem that satisfies the energy dissipation law (\ref{modified_energy_dissipation}) under a mild restriction on the step-size ratio. The IEQ and SAV methods require replacing the algebraic expression of the auxiliary variable directly by its time derivative to formulate an ordinary differential equation. However, this replacement of an algebraic expression by its direct time derivative is considered to have weak instability in the context of the differential-algebraic equation, that is, the original algebraic expression of the auxiliary variable will not be well maintained as time goes on (See e.g. \cite{lin1997sequential, alsafri2023numerical}). Such a temporal error increase for both IEQ and SAV methods has been observed in literature, e.g., \cite{jiang2022improving} and \cite{alsafri2023numerical}. Recently, based on an idea originally introduced by \cite{besse2002order} for the Schr\"{o}dinger equation, a linear relaxation method for the integer‑order phase‑field model with the double-well potential was independently proposed by \cite{zhang2024linear} and \cite{alsafri2023numerical}, the latter further considering the coupling with fluid equations and showing the reduced error in the algebraic expression as the time goes. The key idea is that the auxiliary variables are discretized on a time-staggered grid, resulting in the algebraic expression of the auxiliary variable in either IEQ or SAV being directly solved rather than replaced by its time derivative, to overcome the weak temporal instability known in the numerical treatment in the differential-algebraic equation context (See e.g. \cite{lin1997sequential, alsafri2023numerical}).

In this paper, we construct second-order variable steps numerical schemes for the time-fractional phase-field model with a free energy density $F(\phi)$ in general polynomial form, based on the $L1^{+}$-CN formula and a relaxation method. Notably, the proposed relaxation method differs from that in \cite{zhang2024linear} and \cite{alsafri2023numerical}, which appear to consider only the case where $F(\phi)$ is the double-well potential. The time-fractional volume-conserved Allen-Cahn equation, the time-fractional Cahn-Hilliard equation, and the time-fractional Swift-Hohenberg equation are taken as examples to show the corresponding numerical schemes. The energy stability and asymptotic compatibility of these schemes are proved. Furthermore, we illustrate through numerical results that the algebraic equation associated with the expression of the auxiliary variable in these schemes does not deteriorate over time, thus ensuring the numerical reliability in long-term computations. This particular advantage is reported here for the first time.

The remainder of this paper is organized as follows. In Section \ref{section2}, we develop the relaxation method for the general phase field model with a free energy density $F(\phi)$ in general polynomial form. In Section \ref{section3}, we use the proposed method to construct the $L1^+$-CN linear relaxation schemes and prove their energy stability and asymptotic compatibility for the time-fractional volume-conserved Allen–Cahn, Cahn–Hilliard, and Swift-Hohenberg equations. In Section \ref{section4}, we carry out numerical experiments to demonstrate the effectiveness of the proposed schemes. Some conclusions and remarks are given in Section \ref{section5}.

\section{Linear relaxation method with energy reformulation}
\label{section2}
We introduce a linear relaxation method to reformulate the free energy expression by introducing an auxiliary variable, thus obtaining time-independent auxiliary equations that allow the construction of numerical schemes in which the auxiliary and original variables are treated linearly.

To introduce the basic principles of the relaxation method, first, we give the free energy
\begin{equation}
    E[\phi] = \frac{1}{2} \left\langle \mathcal{L} \phi, \phi \right \rangle + \left\langle F(\phi),1\right \rangle,
\end{equation}
where $\langle \cdot, \cdot\rangle$ is the $L^2(\Omega)$ inner product, and the free energy density function $F(\phi)$ defined in a general form as
\begin{equation}
    F(\phi) =  \frac{a_1}{4}\phi^4 + \frac{a_2}{3} \phi^3 + \frac{a_3}{2} \phi^2 + a_4 \phi +a_5.
\end{equation}
$a_1 , a_2, a_3, a_4, a_5$ are constant coefficients and $a_1 >0$.

Let
\begin{equation}
    \begin{aligned}
b_1 &= a_1, \\
b_2 &= \frac{a_2}{2a_1}, \\
b_3 &= \frac{4a_1^2 a_3 - a_2^2}{8a_1^3}, \\
b_4 &= a_4 - \frac{a_2(4a_1^2 a_3 - a_2^2)}{8a_1^4}, \\
b_5 &= a_5 - \frac{(4a_1^2 a_3 - a_2^2)^2}{64a_1^6}.
\end{aligned}
\end{equation}
The functional potential energy $F[\phi ]$ can be reformulated as
\begin{equation}
F[\phi]  =  \int_\Omega \left(b_1  \phi^2 + b_2\phi  + b_3\right)^2  + b_4\phi +b_5 \ d \mathbf{x}.
\end{equation}

We introduce an auxiliary variable
\begin{equation}
    r (\phi) = b_1 \phi^2 + b_2 \phi + b_3 -S,
\end{equation}
where $S$ is the stabilized term.
The modified nonlinear potential term is equivalent to
\begin{equation}
    \tilde F[\phi ,r]  =\int_\Omega \left( r + S \right)^2 + b_4 \phi +b_5 \ d \mathbf{x},
\end{equation}
and
\begin{equation}
   \  F^{'} [\phi] = \int_\Omega 2(r+S)\cdot r' + b_4 \phi + b_5 \ d \mathbf{x},\ r' = 2b_1 \phi +  b_2.
\end{equation}
Then,   the phase-field model (\ref{the1-1}) can be rewritten as
\begin{equation}\label{relaxation_model}
    \begin{aligned}
    \partial_t^\alpha \phi & =  M \mathcal{G} \mu,\\
    \mu & = \mathcal{L}\phi + 2(r+S)\cdot r' + b_4 \phi + b_5,\\
    r& = b_1 \phi^2 + b_2 \phi + b_3 -S,
    \end{aligned}
\end{equation}
with the associated modified energy  defined as
\begin{equation}
    \tilde{E} [\phi, r] = \frac{1}{2} \left\langle \mathcal{L} \phi, \phi \right\rangle +  \left \langle \left( r + S \right)^2 + b_4 \phi +b_5, 1 \right \rangle.
\end{equation}
The relaxation system (\ref{relaxation_model}) obeys the modified energy dissipation law that
\begin{equation}
\begin{aligned}
  &\tilde{E}[T] - \tilde{E}[0]=\int_0^T \frac{d}{dt} \tilde{E}[\phi, r] dt = \int_0^T \int_{\Omega} \frac{\partial \tilde{E}}{\partial \phi} \phi_t + \frac{\partial \tilde{E}}{\partial r} r^{'} \phi_t  \, d\mathbf{x} dt \\
    &= \int_0^T \int_{\Omega} \left(\mathcal{L} \phi + b_4 \phi + b_5 + 2(r+S)r' \right) \phi_t \, d\mathbf{x}  dt\\
    &= -M^{-1}\int_0^T \int_{\Omega} (\mathcal{G}^{-1} \partial_t \phi) \phi_t \, d\mathbf{x}  dt    \leq 0.
\end{aligned}
\end{equation}
Based on the positive definiteness of the kernel function \cite[Corollary 2.1]{tang2019energy}, the above modified energy satisfies the energy boundedness law that
\begin{equation}\label{the7}
    \begin{aligned}
            &\tilde E[T]-\tilde E[0]  = -M^{-1}\int_0^T \int_\Omega\left( \mathcal{G} ^{-1} \partial_t^\alpha\phi\right)\phi_t d \mathbf{x} \, dt\leq 0.
    \end{aligned}
\end{equation}

This energy-boundedness law ensures the stability of the system.
To further capture the dissipative dynamics during the system evolution process, we introduce the associated modified variational energy following the idea of the decreasing upper bound functional of the energy in \cite{quan2022decreasing}. Define the variational modified energy
\begin{equation}
\tilde{E}_\alpha[\phi,r,q] := \tilde E[\phi,r,q] + \frac{\omega_{1-\alpha}(t)}{2M} \|\phi(t) - \phi(0)\| ^2 - \frac{1}{2M} \int_0^t \omega_{-\alpha}(t - s) \|\phi(t) - \phi(s)\| ^2 \, ds.
\end{equation}
It is established that
\begin{equation}
\frac{d\tilde{E}_\alpha[\phi,r,q]}{dt} - \frac{\omega_{-\alpha}(t)}{2M} \|\phi(t) - \phi(0)\| ^2 + \frac{1}{2M} \int_0^t \omega_{-\alpha-1}(t - s) \|\phi(t) - \phi(s)\| ^2 \, ds = 0.
\end{equation}
This variational modified energy $\tilde{E} \rightarrow E $ as $\alpha\rightarrow 1^{-} $, see \cite[Proposition 3.1]{quan2023decreasing}.

In the next section, we apply this relaxation method to three classical time-fractional phase-field models and construct the corresponding numerical schemes.

\section{The $L1^{+}$-CN  linear relaxation scheme}
\label{section3}
In this section, we first provide the $L1^{+}$-CN discrete formula for the fractional derivative and introduce an important lemma. We then construct numerical schemes for the time-fractional Allen-Cahn, Cahn-Hilliard, and Swift-Hohenberg equations and prove their energy stability and asymptotic compatibility.

For a given time $T \geq 0 $ and a positive integer $N$, set the non-uniform temporal grid $0 = t_0 < t_1 < \cdots<t_k<\cdots < t_N = T$, time-step sizes $\tau_k = t_k - t_{k-1}\ (1 \leq k \leq N)$, and the adjacent step size ratios $\rho_k=\frac{\tau_k}{\tau_{k-1}}\ (2 \leq k \leq N)$. For the grid function $u^k$, define the notation $\nabla_\tau u^k = u^k - u^{k-1}$, $\partial_\tau u^k = \nabla_\tau u^k/\tau_k$, and the midpoint approximation $u^{k-\frac{1}{2}} = \frac{u^k + u^{k-1}}{2}$.
Instead of directly evaluating $\partial_t^\alpha u$, the $L1^{+}$-CN scheme computes its average over the preceding time interval $\left[t_{n-1}, t_n \right]$, which makes the scheme better suited for non-uniform time steps. It is expressed as \cite{ji2020adaptive}:
\begin{equation}\label{L1+}
\left( \partial_{\tau}^{\alpha} u \right)^{n-\frac{1}{2}} := \frac{1}{\Gamma(1-\alpha) \tau_n} \int_{t_{n-1}}^{t_n} \sum_{k=1}^n \int_{t_{k-1}}^{\min\{t_k, t\}} (t - s)^{-\alpha} \frac{u^k - u^{k-1}}{\tau_k} \, ds \, dt = \sum_{k=1}^n b_{n-k}^{(1-\alpha, n)} \nabla_\tau u^k,
\end{equation}
where the discrete convolution kernel is given by
\begin{equation}\label{caputo_kernel}
b_{n-k}^{(\alpha, n)} = \frac{1}{\tau_n \tau_k} \int_{t_{n-1}}^{t_n} \int_{t_{k-1}}^{\min\{t_k, t\}} \omega_\alpha(t-s) \, ds \, dt.
\end{equation}

The following lemma will be used later to derive the asymptotic compatibility variational energy dissipation law for the linear relaxation scheme applied to three time-fractional phase-field models.

\begin{lemma}\cite{qi2024unified}\label{lemma_DGS}
    Under the time-step size ratio constraint
\begin{equation}\label{time_step_constraint}
\rho_{k+1} \geq H_\alpha\left(\rho_k\right)=\left[ \frac{  2h(\rho_k) - h(2\rho_k)  }{ \rho_k^{ \alpha} (4 - 2^{1+\alpha}) } \right]^{\frac{1}{ \alpha}}, \quad   \text{for} \ k \geq 2,
\end{equation}
where $h(s) = (1 + s)^{1+\alpha} - s^{1+\alpha} - 1$. Then the modified discrete kernels $\tilde b_{n-k}^{(\alpha, n)}$ satisfy the following relationships :
\begin{enumerate}
    \item $\tilde{b}^{(\alpha,n)}_{n-k-1} \geq \tilde{b}^{(\alpha,n)}_{n-k}$,   $1\leq k\leq n-1$,
    \item $\tilde{b}^{(\alpha,n-1)}_{n-k-1} \geq \tilde{b}^{(\alpha,n)}_{n-k}$,   $1\leq k\leq n-1$,
    \item $ \tilde{b}^{(\alpha,n-1)}_{n-2-k} - \tilde{b}^{(\alpha,n-1)}_{n-1-k} \geq \tilde{b}^{(\alpha,n)}_{n-1-k} - \tilde{b}^{(\alpha,n)}_{n-k} $,  $1\leq k\leq n-2$.
\end{enumerate}
with modified kernels
\begin{equation}\label{modified_dis_kernel}
    \tilde{b}^{(\alpha,k)}_0 = 2b^{(\alpha,k)}_0,\ \tilde{b}^{(\alpha,k)}_{k-1} = b^{(\alpha,k)}_{k-1},\ 1 \leq j\leq k-1.
\end{equation}
Furthermore, the following discrete gradient structure of the $L1^+$ scheme (\ref{L1+}) holds
\begin{equation}\label{DGS}
\left\langle \sum_{k=1}^n b_{n-k}^{(\alpha, n)} u^k, u^n \right\rangle  = \mathcal{A}_a\left(u^n\right) - \mathcal{A}_a\left(u^{n-1}\right) + \mathcal{R}_a\left(u^n\right),\quad \text{for}\ n \geq 2,
\end{equation}
where the two norm functionals are nonnegative and defined by
\begin{equation} \label{GS_kernal1}
\mathcal{A}_\alpha\left(u^n\right) = \frac{1}{2} \sum_{k=1}^{n-1} \left( \tilde{b}^{(\alpha,n)}_{n-k-1} - \tilde{b}^{(\alpha,n)}_{n-k} \right) \left\| \sum_{j=k+1}^n u^j \right\| ^2 + \frac{1}{2}  \tilde{b}^{(\alpha,n)}_{n-1}\left\| \sum_{j=1}^n u^j \right\| ^2,
\end{equation}

\begin{equation} \label{GS_kernal2}
\mathcal{R}_a\left(u^n\right) = \frac{1}{2} \sum_{k=1}^{n-2} \left( \tilde{b}^{(\alpha,n-1)}_{n-2-k} - \tilde{b}^{(\alpha,n-1)}_{n-1-k} - \tilde{b}^{(\alpha,n)}_{n-1-k} + \tilde{b}^{(\alpha,n)}_{n-k} \right) \left\| \sum_{j=k+1}^{n-1} u^j \right\| ^2 + \frac{1}{2} \left( \tilde{b}^{(\alpha,n-1)}_{n-2} - \tilde{b}^{(\alpha,n)}_{n-1} \right) \left\| \sum_{j=1}^{n-1} u^j \right\| ^2.
\end{equation}
\end{lemma}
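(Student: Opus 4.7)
The plan is to prove the lemma in two stages. First, I would establish the three monotonicity-type inequalities on the modified kernels $\tilde b^{(\alpha,n)}_{n-k}$ under the step-size ratio constraint (\ref{time_step_constraint}); second, I would use these inequalities to derive the discrete gradient structure (\ref{DGS}) via an Abel summation argument. Both stages hinge on the explicit double-integral representation (\ref{caputo_kernel}) of the raw kernel $b^{(\alpha,n)}_{n-k}$, which permits direct termwise estimates.

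For the kernel inequalities, Property~1 (within-level monotonicity) should reduce to the monotonicity of $\omega_\alpha(t) = t^{\alpha-1}/\Gamma(\alpha)$ after a change of variables that normalizes one of the integration intervals. Property~2 (adjacent-level monotonicity) follows from the observation that replacing the outer averaging interval $[t_{n-1},t_n]$ in $b^{(\alpha,n)}$ by $[t_{n-2},t_{n-1}]$ in $b^{(\alpha,n-1)}$ moves the evaluation point closer to the past grid points $t_k, t_{k-1}$, so the power-law weight increases. The delicate inequality is Property~3, which couples differences across both the index and the level. Writing out all four terms using (\ref{caputo_kernel}) and rescaling by $\tau_n$, one reduces to a one-variable inequality in $\rho_n$ parametrized by $\rho_{n-1}$. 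The function $H_\alpha$ in (\ref{time_step_constraint}) is designed precisely as the threshold for this inequality, and the expression $h(s)=(1+s)^{1+\alpha}-s^{1+\alpha}-1$ emerges naturally when expanding the integrals of $\omega_\alpha$ over consecutive subintervals of lengths $\tau_{n-1}, \tau_n, \tau_n+\tau_{n-1}$.

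For the discrete gradient structure, I would apply Abel summation to rewrite $\sum_{k=1}^n b^{(\alpha,n)}_{n-k} u^k$ as a linear combination of cumulative tails $\sum_{j=k+1}^n u^j$, with coefficients equal to the consecutive differences of the modified kernels. Pairing with $u^n$ and using the elementary identity $\langle a,b\rangle = \tfrac12(\|a\|^2 + \|b\|^2 - \|a-b\|^2)$ recursively produces the nonnegative quadratic functional $\mathcal{A}_\alpha(u^n)$ of (\ref{GS_kernal1}); the nonnegativity of its coefficients is exactly Property~1. Subtracting the corresponding expression $\mathcal{A}_\alpha(u^{n-1})$ extracted from the level-$(n-1)$ decomposition, one collects the leftover terms into $\mathcal{R}_\alpha(u^n)$ of the stated form (\ref{GS_kernal2}); its nonnegativity is exactly Property~3, while Property~2 ensures the last boundary term $\tilde b^{(\alpha,n-1)}_{n-2}-\tilde b^{(\alpha,n)}_{n-1}\geq 0$.

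The main obstacle is the sharp derivation of $H_\alpha$ in Property~3. One must evaluate $\tilde b^{(\alpha,n-1)}_{n-2-k} - \tilde b^{(\alpha,n-1)}_{n-1-k} - \tilde b^{(\alpha,n)}_{n-1-k} + \tilde b^{(\alpha,n)}_{n-k}$ explicitly as a combination of $\omega_\alpha$-integrals over staircase domains, and then collapse the algebra into a compact condition involving $h(\rho_k)$ and $h(2\rho_k)$. The remainder of the argument is structural. In practice, since the full computation is carried out in \cite{qi2024unified}, I would verify the kernel identities in a representative small case (say $n=3$) to make the underlying mechanism transparent, and then invoke their result for the general statement.
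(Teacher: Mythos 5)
The paper does not prove this lemma itself; it is quoted directly from \cite{qi2024unified}, so the ``paper's proof'' is just the citation. Your outline --- termwise kernel estimates from the double-integral representation (\ref{caputo_kernel}) for Properties 1--2, the step-ratio threshold $H_\alpha$ arising from the four-term combination in Property 3, and Abel summation with the polarization identity to assemble $\mathcal{A}_\alpha$ and $\mathcal{R}_\alpha$, with nonnegativity supplied by Properties 1--3 --- accurately reflects the argument in the cited reference, and your decision to defer the sharp Property-3 computation to \cite{qi2024unified} matches how the paper itself treats the result.
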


In the following subsections, three exemplary time-fractional phase-field models will be used to illustrate the implementation of our framework.
\subsection{Model I: Volume-conserved time-fractional Allen-Cahn equation}

Consider the volume-conserved time-fractional Allen-Cahn equation
\begin{equation}\label{TFAC}
    \partial_t^\alpha \phi = -M \left(\mu - \mathcal{\eta} \right),\ \mu = \frac{\delta E}{\delta \phi},\ \text{and}\ \mathcal{\eta}(t) = \frac{1}{|\Omega|}\left( \mu,1\right),
\end{equation}
where the free energy is defined as
\begin{equation}
    E[\phi] =\int_\Omega  \frac{\varepsilon^2}{2}|\nabla \phi|^2 + \frac{1}{4}\left(\phi^2-1\right)^2 d\mathbf{x}.
\end{equation}
$\mathcal{\eta}$ is the Lagrange multiplier to keep the Allen-Cahn model volume-conserved, and $\varepsilon>0$ is an interface width parameter.

By introducing an auxiliary variable,
\begin{equation}
    r(\phi) = \phi^2-1-S,
\end{equation}
Eq.~(\ref{TFAC}) can be reformulated in the form of
\begin{subequations}\label{RRER_TFAC}
    \begin{align}
        &\partial_t^\alpha \phi  = - M \left(\mu - \bar{\mu} \right) ,\label{RRER_TFAC1} \\
        &\mu = -\varepsilon^2 \Delta \phi +  \left(r + S \right) \phi, \label{RRER_TFAC2}\\
        &r= \phi^2-1 - S,\label{RRER_TFAC3}\\
        &\bar{\mu} = |\Omega|^{-1}\langle \mu,1 \rangle.\label{RRER_TFAC4}
    \end{align}
\end{subequations}
Define the associated modified energy
\begin{equation}\label{modienergyac}
    \tilde {E} [\phi,r] := \int_\Omega \left(\frac{ \varepsilon^2}{2}|\nabla \phi|^2 + \frac{1}{2} (r+S)\left(\phi^2-1 -S \right) -\frac{1}{4}r^2\right) \ d \mathbf{x} + \frac{S^2}{4}|\Omega|,
\end{equation}
and the modified variational energy
\begin{equation}
\tilde{E}_\alpha[\phi,r] := \tilde E[\phi,r] + \frac{\omega_{1-\alpha}(t)}{2M} \|\phi(t) - \phi(0)\| ^2 - \frac{1}{2M} \int_0^t \omega_{-\alpha}(t - s) \|\phi(t) - \phi(s)\| ^2 \, ds.
\end{equation}

By substituting Eq.~(\ref{RRER_TFAC3}) into Eq.~(\ref{modienergyac}), the modified energy $\tilde{E}[\phi, r]$ is demonstrated to be consistent with the original energy $E[\phi]$. Consequently, for a given initial condition $\phi_0$, the relation $\tilde{E}[t_0] = E[t_0]$ holds. It is proven that
\begin{equation}\label{modified_energy_dt}
    \begin{aligned}
        &\frac{d \tilde E[\phi,r]}{dt} = \int_\Omega\frac{\delta \tilde E}{\delta \phi}\phi_t + \frac{\delta \tilde E}{\delta r}r'\phi_t d \mathbf{x}\\
        &=\int_\Omega \left(\varepsilon^2 \Delta\phi +\frac{1}{2} (r+S)\cdot2\phi+\frac{1}{2}r^{'}\cdot \left(\phi^2-1-S \right)-\frac{1}{2}rr' \right) \phi_t d\mathbf{x} \\
        &= -M^{-1}\int_\Omega\left(  \partial_t^\alpha\phi\right)\phi_t d \mathbf{x}=\frac{dE}{dt}.
    \end{aligned}
\end{equation}
Thus, the following relationship holds.
\begin{equation}\label{the7}
    \begin{aligned}
            &\tilde E[\phi(T), r(T)] - \tilde E[\phi(0), r(0)]  = -M^{-1}\int_0^T \int_\Omega\left(  \partial_t^\alpha\phi\right)\phi_t d \mathbf{x} \, dt\leq 0.
    \end{aligned}
\end{equation}
Furthermore, the modified variational energy satisfies that
\begin{equation}
\frac{d\tilde{E}_\alpha[\phi,r]}{dt} - \frac{\omega_{-\alpha}(t)}{2\kappa} \|\phi(t) - \phi(0)\| ^2 + \frac{1}{2\kappa} \int_0^t \omega_{-\alpha-1}(t - s) \|\phi(t) - \phi(s)\| ^2 \, ds = 0,
\end{equation}
where $\| \cdot\|$ is the $L^2$ norm.

Then, we propose the semi-discrete scheme. Integrating Eq.~(\ref{RRER_TFAC}) over $[t_{n}, t_{n+1}], n\geq 0$, and using the $L1^{+}$ formula for the Caputo derivative, we obtain the linear relaxation $L1^+$-CN scheme as follows
\begin{subequations}\label{Dis_RRER_AC}
    \begin{align}
        &\partial_\tau^\alpha \phi^{n+\frac{1}{2}} = - M\left(\mu^{n+\frac{1}{2}} - \bar{\mu} ^{n+\frac{1}{2}} \right), \label{Dis_RRER_AC1} \\
        &\mu^{n+\frac{1}{2}} = -\varepsilon^2 \Delta \phi^{n+\frac{1}{2}} +  \left(r^{n+\frac{1}{2}} + S \right) \phi^{n+\frac{1}{2}} ,\label{Dis_RRER_AC2}\\
        &\frac{r^{n+\frac{1}{2}} +r^{n-\frac{1}{2}} }{2} = \left(\phi^n \right)^2-1 - S,\label{Dis_RRER_AC3}\\
        &\bar{\mu}^{n+\frac{1}{2}} = |\Omega|^{-1} \langle \mu^{n+\frac{1}{2}},1 \rangle.
    \end{align}
\end{subequations}
The discrete modified energy is
\begin{equation}\label{modienergycac}
\begin{split}
    &\tilde {E} \left [ \phi^{n+1},r^{n+\frac{1}{2}} \right ]\\
    & = \int_\Omega \left(\frac{ \varepsilon^2}{2}|\nabla \phi^{n+1}|^2 + \frac{1}{2} (r^{n+\frac{1}{2}}+S)\left(\left(\phi^{n+1}\right)^2-1 -S \right) -\frac{1}{4}\left(r^{n+\frac{1}{2}} \right)^2\right) \ d \mathbf{x} + \frac{S^2}{4}|\Omega|.
    \end{split}
\end{equation}
For $n=0$, we set $r^{\frac{1}{2}} = ( \phi^0)^2 -1 - S$, and extend this definition to include $r^{-\frac{1}{2}} =  ( \phi^0)^2 -1 - S$. Consequently, for a given initial value $\phi^0$, the modified energy satisfies $\tilde{E}\left[ \phi^0, r^{-\frac{1}{2}} \right] = E\left[ \phi^0 \right]$.

\begin{theorem}\label{theorem_dis_AC_volm_consr}
The $L1^+$-CN  linear relaxation scheme (\ref{Dis_RRER_AC}) preserves the following discrete volume-conservation law
\begin{equation}
    \left \langle \phi^n, 1 \right \rangle =  \left \langle \phi^0, 1 \right \rangle,\ n \geq 1.
\end{equation}
\end{theorem}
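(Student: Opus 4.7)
The plan is to test equation (\ref{Dis_RRER_AC1}) against the constant function $1$ and use the fact that the Lagrange multiplier $\bar{\mu}^{n+\frac{1}{2}}$ is defined precisely so that $\mu^{n+\frac{1}{2}} - \bar{\mu}^{n+\frac{1}{2}}$ has zero mean. This will immediately give $\langle \partial_\tau^\alpha \phi^{n+\frac{1}{2}}, 1 \rangle = 0$, and the remainder of the argument is a simple induction on the discrete time index using the positivity of the kernel $b_0^{(1-\alpha,n)}$.

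Concretely, I would first take the $L^2$ inner product of (\ref{Dis_RRER_AC1}) with $1$: the right-hand side becomes
\begin{equation*}
-M\bigl(\langle \mu^{n+\frac{1}{2}},1\rangle - \bar{\mu}^{n+\frac{1}{2}}|\Omega|\bigr)=0
\end{equation*}
directly by the definition $\bar{\mu}^{n+\frac{1}{2}}=|\Omega|^{-1}\langle \mu^{n+\frac{1}{2}},1\rangle$. Substituting the $L1^+$ expansion (\ref{L1+}) on the left-hand side yields
\begin{equation*}
\sum_{k=1}^{n+1} b_{n+1-k}^{(1-\alpha,n+1)} \langle \nabla_\tau \phi^k, 1\rangle = 0.
\end{equation*}

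For the induction, the base case $n=0$ reduces to $b_0^{(1-\alpha,1)}\langle \nabla_\tau \phi^1,1\rangle=0$; since $b_0^{(1-\alpha,1)}>0$ (evident from the integral representation (\ref{caputo_kernel}) with a positive Riemann--Liouville kernel), we conclude $\langle \phi^1,1\rangle=\langle \phi^0,1\rangle$. Assuming $\langle \nabla_\tau \phi^k,1\rangle=0$ for all $1\leq k\leq n$, every summand with $k\leq n$ in the above identity vanishes, leaving $b_0^{(1-\alpha,n+1)}\langle \nabla_\tau\phi^{n+1},1\rangle=0$. The positivity of $b_0^{(1-\alpha,n+1)}$ then forces $\langle \phi^{n+1},1\rangle=\langle \phi^n,1\rangle=\langle \phi^0,1\rangle$, closing the induction.

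There is no genuine obstacle here: the relaxation and CN midpoint structure do not affect the argument because the mean-preserving property is inherited step-by-step entirely from the Lagrange multiplier. The only point worth flagging is that one must use the $L1^+$ representation of the discrete Caputo operator as a linear combination of increments $\nabla_\tau\phi^k$ with a strictly positive leading coefficient; any alternative rearrangement of the same operator would need the same positivity to make the induction go through. The auxiliary variable $r^{n+\frac{1}{2}}$ plays no role in the conservation law, since it is eliminated when testing (\ref{Dis_RRER_AC1}) against $1$.
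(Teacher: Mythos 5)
Your proposal is correct and follows essentially the same route as the paper: test the scheme against the constant function so that the definition of $\bar{\mu}^{n+\frac{1}{2}}$ annihilates the right-hand side, then induct on the time level, using the induction hypothesis to remove the history terms in the $L1^+$ expansion and the strict positivity of the leading kernel coefficient $b_0^{(1-\alpha,n)}$ to conclude $\langle \nabla_\tau\phi^{n},1\rangle=0$. Your explicit justification of that positivity from the integral representation of the kernel is a detail the paper leaves implicit, but the argument is otherwise identical.
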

\begin{proof}
    We will prove the theorem by mathematical induction. It can be verified by a direct computation that $\left \langle \phi^1,1\right \rangle = \left \langle \phi^0,1 \right \rangle$. Assume that $\left \langle \phi^k,1\right \rangle = \left\langle \phi^0,1 \right \rangle, (k=1,\cdots,n-1)$ holds. For $k=n$, according to Eq.~(\ref{Dis_RRER_AC}), we have
    \begin{equation}
        \begin{aligned}
            \left \langle \phi^n - \phi^{n-1},1 \right \rangle&= - \frac{M}{b_0^{(\alpha,n)} }\left( \langle \mu^{n-\frac{1}{2}} - \bar{\mu} ^{n-\frac{1}{2}},1 \right \rangle - \sum_{k=1}^{n-1}\frac{b_{n-k}^{(\alpha, n)}}{b_0^{(\alpha,n)} }\left \langle \phi^k - \phi^{k-1},1 \right \rangle,\\
            & = - \frac{M}{b_0^{(\alpha,n)} } \left(\left\langle\mu^{n-\frac{1}{2}} ,1 \right\rangle -\bar{\mu} ^{n-\frac{1}{2}} \right)=0.
        \end{aligned}
    \end{equation}
    The proof is complete.
\end{proof}

\begin{theorem}\label{theorem_energy_TFAC}
The $L1^+$-CN  linear relaxation scheme (\ref{Dis_RRER_AC}) preserves the modified energy law
    \begin{equation}\label{disenergylawac}
        \tilde E\left[ \phi^{n+1} , r^{n+\frac{1}{2}}\right]\leq   \tilde E\left[ \phi^{0} , r^{-\frac{1}{2}}\right]=  E\left[ \phi^0\right],\ \text{for}\ n\geq 0,
    \end{equation}
where $\tilde{E}\left[ \phi^{n+1}, r^{n+1/2}\right]$ is defined in (\ref{modienergycac}).
Under the time-step ratio constraint (\ref{time_step_constraint}), this scheme preserves the modified variational energy dissipation property that
\begin{equation} \label{varia_dis_ener_ac}
\tilde{E}_\alpha \left[\phi^{n+1}, r^{n+ \frac{1}{2}}\right]-\tilde{E}_\alpha \left[\phi^{n}, r^{n- \frac{1}{2}}\right]=- \frac{1}{M } \mathcal{R}_{1-\alpha} \left( \nabla_\tau \phi^{n+1}\right) \leq 0,\ \text{for}\ n \geq 2,
\end{equation}
with
\begin{equation}
\tilde{E}_\alpha \left[ \phi^{n+1}, r^{n+\frac{1}{2}} \right]:=\tilde{E}\left[ \phi^{n+1}, r^{n+\frac{1}{2}} \right] +\frac{1}{M  } \mathcal{A}_{1-\alpha}\left( \nabla_\tau \phi^{n+1}\right).
\end{equation}
\end{theorem}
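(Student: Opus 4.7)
My plan is to derive a clean one-step energy identity that equates $\tilde E^{n+1}-\tilde E^{n}$ with a fractional-in-time bilinear form on the increments, and then to invoke the positive-definite/gradient-structure results already recorded in \refl{lemma_DGS}. The first move is to test (\ref{Dis_RRER_AC1}) in $L^2(\Omega)$ with $\nabla_\tau \phi^{n+1}$. By \reft{theorem_dis_AC_volm_consr} the constant Lagrange multiplier $\bar\mu^{n+1/2}$ drops out, since $\langle 1,\nabla_\tau \phi^{n+1}\rangle=0$. Substituting (\ref{Dis_RRER_AC2}), integrating by parts on the $-\varepsilon^2\Delta\phi^{n+1/2}$ piece, and using the pointwise identity $\phi^{n+1/2}\nabla_\tau\phi^{n+1}=\tfrac12((\phi^{n+1})^2-(\phi^n)^2)$ reduces the right-hand side to
\begin{equation*}
-M\Bigl[\tfrac{\varepsilon^2}{2}\bigl(\|\nabla\phi^{n+1}\|^2-\|\nabla\phi^n\|^2\bigr)+\tfrac{1}{2}\bigl\langle r^{n+1/2}+S,\ (\phi^{n+1})^2-(\phi^n)^2\bigr\rangle\Bigr].
\end{equation*}

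I would then expand $\tilde E[\phi^{n+1},r^{n+1/2}]-\tilde E[\phi^n,r^{n-1/2}]$ term by term from (\ref{modienergycac}). The Dirichlet piece matches the first bracket above verbatim. For the potential part I would write
\begin{equation*}
(r^{n+1/2}+S)\bigl((\phi^{n+1})^2-1-S\bigr)=(r^{n+1/2}+S)\bigl((\phi^{n+1})^2-(\phi^n)^2\bigr)+(r^{n+1/2}+S)\bigl((\phi^n)^2-1-S\bigr),
\end{equation*}
and then use the staggered relaxation (\ref{Dis_RRER_AC3}) at time $t_n$, namely $(\phi^n)^2-1-S=\tfrac12(r^{n+1/2}+r^{n-1/2})$, in the last factor of both the step-$(n{+}1)$ and the step-$n$ potentials. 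The residual cross product collapses to $\tfrac14\bigl((r^{n+1/2})^2-(r^{n-1/2})^2\bigr)$, which cancels exactly against the $-\tfrac14\|r\|^2$ contribution in $\tilde E$. Collecting everything yields the equality
\begin{equation*}
\tilde E[\phi^{n+1},r^{n+1/2}]-\tilde E[\phi^n,r^{n-1/2}]=-\tfrac{1}{M}\bigl\langle\partial_\tau^\alpha\phi^{n+1/2},\ \nabla_\tau\phi^{n+1}\bigr\rangle.
\end{equation*}

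With this identity in hand both conclusions follow mechanically. Summing in $n$ and applying the positive semidefiniteness of the $L1^{+}$ kernel $\{b_{n-k}^{(1-\alpha,n)}\}$ (cf.\ \cite[Corollary~2.1]{tang2019energy}) gives $\sum_{m=1}^{N}\langle\sum_{k=1}^{m}b_{m-k}^{(1-\alpha,m)}\nabla_\tau\phi^k,\nabla_\tau\phi^m\rangle\ge 0$, hence $\tilde E[\phi^{N},r^{N-1/2}]\le\tilde E[\phi^{0},r^{-1/2}]=E[\phi^{0}]$, i.e.\ (\ref{disenergylawac}). For the variational statement I would, under the step-ratio constraint (\ref{time_step_constraint}), apply the discrete gradient structure (\ref{DGS}) of \refl{lemma_DGS} to rewrite the same fractional inner product at each step as $\mathcal{A}_{1-\alpha}(\nabla_\tau\phi^{n+1})-\mathcal{A}_{1-\alpha}(\nabla_\tau\phi^{n})+\mathcal{R}_{1-\alpha}(\nabla_\tau\phi^{n+1})$; moving the $\mathcal{A}_{1-\alpha}$ differences into the augmented functional recovers exactly (\ref{varia_dis_ener_ac}), and the non-negativity of $\mathcal{R}_{1-\alpha}$ delivers the claimed dissipation.

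The hard part will be the second paragraph: because $r$ lives on the staggered grid $\{t_{n\pm 1/2}\}$, the relaxation constraint must be inserted at precisely the step $t_n$ so that the quadratic-in-$r$ cross term assembles itself into $\tfrac14((r^{n+1/2})^2-(r^{n-1/2})^2)$ and cancels the $-\tfrac14\|r\|^2$ correction \emph{exactly} rather than up to a sign. Once that bookkeeping is correct, no inequality is needed until the last step, and the fractional-in-time analysis is fully outsourced to \refl{lemma_DGS}.
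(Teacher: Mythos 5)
Your proposal is correct and follows essentially the same route as the paper's proof: testing \eqref{Dis_RRER_AC1}--\eqref{Dis_RRER_AC2} with $\nabla_\tau\phi^{n+1}$, dropping $\bar\mu^{n+1/2}$ via volume conservation, inserting the staggered constraint \eqref{Dis_RRER_AC3} at $t_n$ so the cross term collapses to $\tfrac14\bigl((r^{n+1/2})^2-(r^{n-1/2})^2\bigr)$ and yields the per-step identity $\tilde E[\phi^{n+1},r^{n+1/2}]-\tilde E[\phi^{n},r^{n-1/2}]=-\tfrac1M\langle\partial_\tau^\alpha\phi^{n+1/2},\nabla_\tau\phi^{n+1}\rangle$, then summing with the kernel's positive semidefiniteness for \eqref{disenergylawac} and invoking the discrete gradient structure of \refl{lemma_DGS} for \eqref{varia_dis_ener_ac}. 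The only difference is cosmetic bookkeeping (you expand the energy difference and substitute the constraint, while the paper manipulates the tested nonlinear term directly), so no further comparison is needed.
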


\begin{proof}
    Taking $L^2$ inner product of Eq. \eqref{Dis_RRER_AC1} and Eq. \eqref{Dis_RRER_AC2} with $\nabla_\tau\phi^{k+1} $ for $1 \leq k \leq n$, and summing yields
    \begin{equation} \label{proofcac}
        -M^{-1}\left\langle \partial_\tau^\alpha \phi^{k+ \frac{1}{2}},\nabla_\tau\phi^{k+1}\right \rangle = \left\langle \mu^{k+\frac{1}{2}} -  \bar{\mu}^{k+\frac{1}{2}},  \nabla_\tau\phi^{k+1}\right \rangle.
    \end{equation}

Notice that the following relationship holds
\begin{equation}\label{ac_relax1}
    \begin{aligned}
&(r^{k+\frac{1}{2}} + S) \frac{\phi^{k} + \phi^{k+1}}{2} \nabla_{\tau} \phi^{k+1}\\
&= \frac{1}{2} (r^{k+\frac{1}{2}} +S) \left[ (\phi^{k+1})^2 - (\phi^{k})^2 \right]
- \frac{1}{2} (r^{k-\frac{1}{2}} + S) \left[ (\phi^{k})^2 - (\phi^{k})^2 \right]  \\
&= \frac{1}{2} (r^{k+\frac{1}{2}} + S) (\phi^{k+1})^2 - \frac{1}{2} (r^{k-\frac{1}{2}} + S) (\phi^{k})^2
- \frac{1}{2} (r^{k+\frac{1}{2}} - r^{k-\frac{1}{2}}) (\phi^{k})^2.
\end{aligned}
\end{equation}
For the right term of Eq.~(\ref{proofcac}),
\begin{equation}\label{ac_relax2}
    \begin{aligned}
&\left \langle \mu^{k+\frac{1}{2}}- \bar{\mu}^{k+\frac{1}{2}}, \nabla_{\tau} \phi^{k+1} \right \rangle = \left \langle\mu^{k+\frac{1}{2}},\nabla_{\tau} \phi^{k+1} \right \rangle -\left \langle\bar{\mu}^{k+\frac{1}{2}},\nabla_{\tau} \phi^{k+1} \right \rangle\\
&= \left \langle \varepsilon^2 \Delta \frac{\phi^{k} + \phi^{k+1}}{2} + (r^{k+\frac{1}{2}} +S) \frac{\phi^{k} + \phi^{k+1}}{2}, \nabla_{\tau} \phi^{k+1} \right \rangle \\
&= \frac{\varepsilon^2}{2} \left \langle \|\nabla \phi^{k+1}\|^2_{L^2} - \|\nabla \phi^{k}\|^2_{L^2} \right \rangle
+ \left \langle (r^{k+\frac{1}{2}} +S) \frac{\phi^{k} + \phi^{k+1}}{2}, \nabla_{\tau} \phi^{k+1} \right \rangle \\
&= \frac{\varepsilon^2}{2} \left \langle \|\nabla \phi^{k+1}\|^2_{L^2} - \|\nabla \phi^{k}\|^2_{L^2} \right \rangle
+ \frac{1}{2} \left \langle (r^{k+\frac{1}{2}} +S) \left(  (\phi^{k+1})^2-1-S\right), 1 \right \rangle \\
&\quad - \frac{1}{2} \left \langle (r^{k-\frac{1}{2}} +S) \left( (\phi^{k})^2-1 -S\right), 1 \right \rangle
- \frac{1}{4} \left \langle (r^{k+\frac{1}{2}})^2 - (r^{k-\frac{1}{2}})^2, 1 \right \rangle \\
&= \tilde{E}\left [ \phi^{k+1}, r^{k+\frac{1}{2}}\right] - \tilde{E}\left[ \phi^{k }, r^{k-\frac{1}{2}}\right ].
\end{aligned}
\end{equation}
Note that the discrete volume-conserved law $\left \langle\bar{\mu},\ \nabla_\tau\phi^{k+1} \right\rangle = \bar{\mu} \left \langle 1,\ \nabla_\tau \phi^{k+1} \right \rangle =0$ used here.
By combining Eq.~(\ref{ac_relax1}) and Eq.~(\ref{ac_relax2}), we obtain
    \begin{equation}\label{Dis_rRER_AC2_ener}
        \tilde E\left[\phi^{k+1},r^{k+\frac{1}{2}}\right] - \tilde E\left[\phi^{k},r^{k-\frac{1}{2}}\right] = -\frac{1}{M}\left\langle  \partial_\tau^\alpha \phi^{k+\frac{1}{2}},\nabla_\tau\phi^{k+1} \right\rangle  .
    \end{equation}
By summing Eq.~(\ref{Dis_rRER_AC2_ener}) from $k=0$ to $n$, the discrete energy boundedness law (\ref{disenergylawac}) is established.

Under the time-step ratio constraint condition, we have
    \begin{equation}\label{dis_GS_ac}
        \left( \partial_\tau^\alpha \phi^{n+\frac{1}{2}},\nabla_\tau\phi^{n+1} \right) = \mathcal{A}_{1-\alpha} \left(\nabla_\tau \phi^{n+1} \right) - \mathcal{A}_{1-\alpha} \left(\nabla_\tau \phi^n \right) + \mathcal{R}_{1-\alpha} \left(\nabla_\tau \phi^{n+1} \right).
    \end{equation}
According to Eq.~(\ref{Dis_rRER_AC2_ener}) and Eq.~(\ref{dis_GS_ac}), there is
\begin{equation}
 \tilde E_\alpha\left[ \phi^{n+1},r^{n+\frac{1}{2}}\right] -  \tilde E_\alpha\left[\phi^{n},r^{n-\frac{1}{2}}\right]   = -\frac{1}{M }\mathcal{R}_{1-\alpha}\left(\nabla
 _\tau \phi^n\right),
\end{equation}
with
$\mathcal{A}_\alpha(\cdot)$ and $\mathcal{R}_\alpha \left( \cdot \right)$ are defined in (\ref{GS_kernal1}) and (\ref{GS_kernal2}).

This concludes the proof.
\end{proof}
\begin{remark}\label{remark2.1}
We establish below the connection between the modified energy formulation and the original energy expression. Note that
\begin{equation}\label{remarkac}
    \begin{aligned}
        &E\left[\phi^{n+1} \right] \approx \tilde E\left[ \phi^{n+1}, r^{n+1}\right]\\
         &= \int_\Omega \left(\frac{\varepsilon^2}{2}|\nabla \phi^{n+1}|^2 + \frac{1}{2} \left(r^{n+1} +S\right)\left( \left(\phi^{n+1}\right)^2 -1  - S \right) -\frac{1}{4}\left( r^{n+1} \right)^2\right) \ d \mathbf{x} + \frac{S^2}{4}|\Omega|\\
        &= \int_\Omega \left(\frac{\varepsilon^2}{2}|\nabla \phi^{n+1}|^2 + \frac{1}{2} \left(r^{n+\frac{1}{2}} +S\right)\left( \left(\phi^{n+1}\right)^2 -1  - S \right) -\frac{1}{4}\left( r^{n+\frac{1}{2}} \right)^2\right) \ d \mathbf{x} + \frac{S^2}{4}|\Omega|\\
        &\quad +\int_\Omega\frac{1}{2}\left( r^{n+1}-r^{n+\frac{1}{2}} \right)\left( \left(\phi^{n+1}\right)^2 -1  - S \right) -\frac{1}{4}\left( \left( r^{n+1} \right)^2-  \left( r^{n+\frac{1}{2}} \right)^2\right) d\mathbf{x}\\
        & \approx \tilde E\left[\phi^{n+1}, r^{n+\frac{1}{2}}\right]+\frac{1}{4}\left\|r^{n+1}-r^{n+\frac{1}{2}}  \right\|_{L^2}^2.
    \end{aligned}
\end{equation}
$\left\|r^{n+1}-r^{n+\frac{1}{2}}  \right\|_{L^2}^2$ are second-order accurate in time. In the discrete case, the modified energy is a second-order approximation of the original energy.
\end{remark}

 \begin{remark}\label{remark 2.2}
     (Asymptotic compatibility)
    As fractional order $\alpha \rightarrow 1^{- }$, the fractional discrete system (\ref{Dis_RRER_AC}) degenerates to the integral CN scheme that
     \begin{equation}
    \partial_\tau  \phi^n = - M\left(\mu^{n-\frac{1}{2}} - \bar{\mu} ^{n-\frac{1}{2}} \right),
\end{equation}

Notice that the discrete convolution kernels and the modified discrete convolution kernels satisfy $ {b}_0^{(1-\alpha, n)} \xrightarrow{\alpha \rightarrow 1^{-}} {\tau_n}^{-1},\ \tilde{b}_0^{(1-\alpha, n)} \xrightarrow{\alpha \rightarrow 1^{-}} 2{\tau_n}  ^{-1}$ and $ {b}_j^{(1-\alpha, n)} \xrightarrow{\alpha \rightarrow 1^{-}} 0, \  \tilde{b}_j^{(1-\alpha, n)} \xrightarrow{\alpha \rightarrow 1^{-}} 0\ (1 \leq j \leq n-1)$. The discrete gradient structure constructed terms satisfy $\mathcal{A}_{1-\alpha}\left[ \nabla_\tau \phi^n\right] \xrightarrow{\alpha \rightarrow 1^{-}} \tau_n^{-1}\left \| \nabla_\tau \phi^n \right\|^2  = \tau_n \left\| \partial_\tau \phi^n \right \|^2$, and $\mathcal{R}_{1-\alpha}\left[ \nabla_\tau \phi^n\right] \xrightarrow{\alpha \rightarrow 1^{-}}   \tau_n \left\| \partial_\tau \phi^n \right \|^2 $.
Thus, as $\alpha \rightarrow 1^-$, the variational energy dissipation (\ref{varia_dis_ener_ac}) becomes
\begin{equation}\label{acintegralenergy}
    \partial_\tau \tilde E\left[\phi^n , r^{n-\frac{1}{2}}\right] = -M^{-1}\left\| \partial_\tau \phi^n \right\|^2  = -M \left\| \mu^{n-\frac{1}{2}} - \bar{\mu}^{n-\frac{1}{2}}  \right\|^2, \ 1 \leq n \leq N.
\end{equation}
It indicates that the discrete variational energy dissipation law (\ref{varia_dis_ener_ac}) is asymptotically compatible with the integral discrete energy dissipation law as $\alpha \rightarrow 1^{-}$.
 \end{remark}

\subsection{Model II: Time-fractional Cahn-Hilliard equation}
In this section, we consider the time fractional Cahn-Hilliard equation.
\begin{equation}\label{TFCH}
    \partial_t^\alpha \phi = M \Delta \mu,\ \mu = \frac{\delta  E}{\delta \phi},
\end{equation}
with the free energy
\begin{equation}
    E[\phi] =\int_\Omega \frac{\varepsilon^2}{2}|\nabla \phi|^2 + \frac{1}{4}\phi^2(1-\phi)^2 \ d\mathbf{x}.
\end{equation}

By introducing an auxiliary variable,
\begin{equation}
     r  = \phi(1-\phi) - S,
\end{equation}
the time fractional phase-field model (\ref{TFCH}) is rewritten as
\begin{subequations}\label{relaxation_ch}
    \begin{align}
        \partial_t^\alpha \phi &= M \Delta \mu,\\
        \mu &= -\varepsilon^2 \Delta \phi + \frac{1}{2}\left(r + S \right)(1-2\phi),\\
        r & = \phi(1-\phi) - S.\label{relaxation_ch3}
    \end{align}
\end{subequations}
The corresponding modified energy is reformulated in the form of
\begin{equation}\label{modienergy}
    \tilde {E} [\phi,r] := \int_\Omega \frac{ \varepsilon^2}{2}|\nabla \phi|^2 + \frac{1}{2} ( r + S )\left(\phi\left( 1-\phi\right) - S \right) -\frac{1}{4}r^2 \ d \mathbf{x} + \frac{S^2}{4}|\Omega|.
\end{equation}
It can be verified that the modified energy $\tilde{E}[\phi, r]$ is demonstrated to be consistent with the original energy $E[\phi]$, and the relationship $\tilde{E}[t_0] = E[t_0]$ holds. Furthermore, the time derivative of the modified energy satisfies
\begin{equation}\label{modified_energy_dt}
\begin{aligned}
&\frac{d \tilde E[\phi,r]}{dt}  = \int_\Omega\frac{\delta \tilde E}{\delta \phi}\phi_t + \frac{\delta \tilde E}{\delta r}r'\phi_t d \mathbf{x}\\
&=\int_\Omega \left(\varepsilon^2 \Delta\phi +\frac{1}{2}(r+S)(1-2\phi)+\frac{1}{2}\left(\phi(1-\phi)-S \right)r'-\frac{1}{2}rr' \right) \phi_t d\mathbf{x} \\
&= \int_\Omega\left( \mathcal{G} ^{-1} \partial_t^\alpha\phi\right)\phi_t d \mathbf{x}.
    \end{aligned}
\end{equation}

We proceed to discretize the relaxation system (\ref{relaxation_ch}).
Integrating Eq.~(\ref{relaxation_ch}) over $[t_{n}, t_{n+1}]$, and using the $L1^{+}$ scheme for the Caputo term, we obtain the following linear relaxation $L1^+$-CN scheme
\begin{subequations}\label{Dis_RRER_CH}
    \begin{align}
        &\partial_\tau^\alpha \phi^{n+\frac{1}{2}} = M \Delta \mu^{n+\frac{1}{2}},\label{Dis_RRER_CH1}\\
        &\mu^{n+\frac{1}{2}} = -\varepsilon^2 \Delta \phi^{n+\frac{1}{2}} + \frac{1}{2} \left(r^{n+\frac{1}{2}} + S \right) \left(1-2\phi^{n+\frac{1}{2}} \right),\label{Dis_RRER_CH2}\\
        &\frac{r^{n+\frac{1}{2}} +r^{n-\frac{1}{2}} }{2} = \phi^n\left(1-\phi^{n }\right) - S.\label{Dis_RRER_CH3}
    \end{align}
\end{subequations}
The corresponding discrete modified energy is
\begin{align}\label{energydisch}
    &\tilde {E} \left [ \phi^{n+1},r^{n+\frac{1}{2}} \right ]\\
    &= \int_\Omega \frac{\varepsilon^2}{2}|\nabla \phi^{n+1}|^2 + \frac{1}{2} \left(r^{n+\frac{1}{2}} +S\right)\left( \phi^{n+1}\left(1-\phi^{n+1}\right) -1  - S \right) -\frac{1}{4}\left( r^{n+\frac{1}{2}} \right)^2 \ d \mathbf{x} + \frac{S^2}{4}|\Omega|.
\end{align}
Define $r^{\frac{1}{2}} =   \phi^0(1-\phi^0) - S$, and extend $r^{-\frac{1}{2}} = \phi^0(1-\phi^0) - S$. It is easy to verify that the modified energy satisfies $\tilde{E}\left[ \phi^0, r^{-\frac{1}{2}} \right] = E\left[ \phi^0 \right]$.

\begin{theorem}\label{theorem_energy_TFCH}
The $L1^+$-CN linear relaxation scheme (\ref{Dis_RRER_CH}) preserves the modified energy law:
    \begin{equation}\label{disenergylawch}
        \tilde E\left[ \phi^{n+1} , r^{n+\frac{1}{2}}\right] \leq   \tilde E\left[ \phi^{0} , r^{-\frac{1}{2}}\right]=  E\left[ \phi^0\right],\ \text{for}\ n\geq 0,
    \end{equation}
where $\tilde E\left[ \phi^{n+1} , r^{n+\frac{1}{2}}\right]$ defined in (\ref{energydisch}).
Under the time-step ratio constraint (\ref{time_step_constraint}), it preserves the discrete modified variational energy dissipation property that
\begin{equation}\label{modified_energy_decay}
\tilde{E}_\alpha \left[\phi^{n+1}, r^{n+ \frac{1}{2}}\right]-\tilde{E}_\alpha \left[\phi^{n}, r^{n- \frac{1}{2}}\right]=- \frac{1}{M } \mathcal{R}_{1-\alpha} \left( \nabla_\tau \phi^{n+1}\right)  ,\ \text{for}\ n \geq 2,
\end{equation}
with
\begin{equation}
\tilde{E}_\alpha \left[ \phi^{n+1}, r^{n+\frac{1}{2}} \right]:=\tilde{E}\left[ \phi^{n+1}, r^{n+\frac{1}{2}} \right] +\frac{1}{M  } \mathcal{A}_{1-\alpha}\left( \nabla_\tau \phi^{n+1}\right).
\end{equation}
\end{theorem}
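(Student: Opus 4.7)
The plan is to mirror the proof of Theorem~\ref{theorem_energy_TFAC}, with two essential modifications. The first is that the Cahn-Hilliard mobility operator $\mathcal{G}=-\Delta$ (rather than the identity) forces the natural pairing to be the $H^{-1}$ inner product $\langle u,v\rangle_{-1}:=\langle(-\Delta)^{-1}u,v\rangle$; the second is that the double-well form of the potential requires a different discrete chain-rule identity. From \eqref{Dis_RRER_CH1} one has $\mu^{k+\frac{1}{2}}=-M^{-1}(-\Delta)^{-1}\partial_\tau^\alpha\phi^{k+\frac{1}{2}}$, so taking the $L^2$ inner product of \eqref{Dis_RRER_CH2} with $\nabla_\tau\phi^{k+1}$ and integrating the $-\varepsilon^2\Delta\phi^{k+\frac{1}{2}}$ term by parts (under the boundary conditions that make $(-\Delta)^{-1}$ well defined on the mean-zero subspace) yields
\[
-M^{-1}\bigl\langle\partial_\tau^\alpha\phi^{k+\frac{1}{2}},\nabla_\tau\phi^{k+1}\bigr\rangle_{-1} = \tfrac{\varepsilon^2}{2}\bigl(\|\nabla\phi^{k+1}\|^2-\|\nabla\phi^k\|^2\bigr) + \tfrac{1}{2}\bigl\langle(r^{k+\frac{1}{2}}+S)(1-2\phi^{k+\frac{1}{2}}),\nabla_\tau\phi^{k+1}\bigr\rangle.
\]

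The central algebraic step is the discrete chain-rule identity
\[
(1-2\phi^{k+\frac{1}{2}})\nabla_\tau\phi^{k+1} = \phi^{k+1}(1-\phi^{k+1}) - \phi^k(1-\phi^k),
\]
which follows from the factorization $a-a^2-b+b^2=(a-b)(1-(a+b))$. Multiplying by $\tfrac{1}{2}(r^{k+\frac{1}{2}}+S)$, the nonlinear term decomposes as a telescoping piece matching the potential part of $\tilde E$ in \eqref{energydisch} plus a residual $\tfrac{1}{2}(r^{k+\frac{1}{2}}-r^{k-\frac{1}{2}})\bigl[\phi^k(1-\phi^k)-S\bigr]$, which the staggered update \eqref{Dis_RRER_CH3}---namely $\phi^k(1-\phi^k)-S=\tfrac{1}{2}(r^{k+\frac{1}{2}}+r^{k-\frac{1}{2}})$---converts into $\tfrac{1}{4}\bigl((r^{k+\frac{1}{2}})^2-(r^{k-\frac{1}{2}})^2\bigr)$, thereby closing the telescope. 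The net per-step identity is
\[
\tilde E\bigl[\phi^{k+1},r^{k+\frac{1}{2}}\bigr] - \tilde E\bigl[\phi^k,r^{k-\frac{1}{2}}\bigr] = -M^{-1}\bigl\langle\partial_\tau^\alpha\phi^{k+\frac{1}{2}},\nabla_\tau\phi^{k+1}\bigr\rangle_{-1},
\]
in direct analogy with \eqref{Dis_rRER_AC2_ener}.

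Summing over $k$ from $0$ to $n$ and invoking the positive semi-definiteness of the discrete Caputo kernel $\{b_j^{(1-\alpha,n)}\}$---which is an inner-product-free algebraic fact and therefore transfers verbatim from $L^2$ to the $H^{-1}$ pairing---establishes \eqref{disenergylawch}. For the variational dissipation \eqref{modified_energy_decay}, I would apply Lemma~\ref{lemma_DGS} with $u^k=\nabla_\tau\phi^k$ read in the $H^{-1}$ inner product, so that the functionals $\mathcal{A}_{1-\alpha}$ and $\mathcal{R}_{1-\alpha}$ carry $\|\cdot\|_{-1}$ norms; combining \eqref{DGS} with the per-step identity above produces exactly \eqref{modified_energy_decay}. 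The main obstacle I anticipate is the bookkeeping in the telescoping step: one must insert \eqref{Dis_RRER_CH3} at precisely the right place so that the residual involving $(r^{k+\frac{1}{2}}-r^{k-\frac{1}{2}})\phi^k(1-\phi^k)$ cancels the quadratic-in-$r$ difference cleanly. The discrete gradient structure and kernel-positivity arguments, by contrast, reduce to citing Lemma~\ref{lemma_DGS} and Corollary~2.1 of \cite{tang2019energy} once the $H^{-1}$ pairing has been correctly identified.
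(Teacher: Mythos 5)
Your proposal is correct and follows essentially the same route as the paper: testing with $\Delta^{-1}\nabla_\tau\phi^{k+1}$ (your $H^{-1}$ pairing is exactly the paper's substitution $\psi^k=\nabla(\Delta^{-1}\phi^k)$), the same discrete chain-rule factorization combined with the staggered relation \eqref{Dis_RRER_CH3} to close the telescope, and Lemma~\ref{lemma_DGS} for the variational dissipation law. Your explicit remark that $\mathcal{A}_{1-\alpha}$ and $\mathcal{R}_{1-\alpha}$ are then evaluated in the $H^{-1}$ norm is a point the paper leaves implicit, but it is the same argument.
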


\begin{proof}
    Taking $L^2$ inner product of Eq. \eqref{Dis_RRER_CH1} and Eq. \eqref{Dis_RRER_CH2}  with $   \Delta^{-1}\nabla_\tau \phi^{k+1}$, setting $\psi^k = \nabla\left(\Delta^{-1} \phi^k\right)$, and summing the resulting equations, we derive that
    \begin{equation} \label{proof1}
         \frac{1}{M}\left\langle  \partial_\tau^\alpha \psi^{k+ \frac{1}{2}}, \nabla_\tau  \psi^{k+1} \right\rangle =\left\langle  \mu^{k+\frac{1}{2}},  \nabla_\tau\phi^{k+1}\right\rangle.
    \end{equation}

We notice that the following relationship holds
\begin{equation} \label{proof2}
\begin{aligned}
&\left(  \frac{1}{2} r^{k+\frac{1}{2}} + \frac{S}{2}\right) \left(1-(\phi^{k+1} + \phi^k )\right) \nabla_\tau \phi^{k+1}  \\
&=\left(  \frac{1}{2} r^{k+\frac{1}{2}} + \frac{S}{2}\right) \nabla_\tau \phi^{k+1}
-  \left( \frac{1}{2} r^{k+\frac{1}{2}} + \frac{S}{2} \right)(\phi^{k+1} + \phi^k) \nabla_\tau \phi^{k+1}  \\
&= \frac{1}{2} \left(   r^{k+\frac{1}{2}} + S\right)  \nabla_\tau \phi^{k+1} + \frac{1}{2} \left ( r^{k-\frac{1}{2}} + S\right ) \left( \phi^k - \phi^k\right) + \frac{1}{2}  \left( r^{k-\frac{1}{2}} + S \right) \left( (\phi^k)^2 - (\phi^k)^2 \right) \\
&\quad - \frac{1}{2}  \left( r^{k+\frac{1}{2}} + S \right) \left( (\phi^{k+1})^2 - (\phi^k)^2 \right) \\
&= \frac{1}{2}  \left( r^{k+\frac{1}{2}} + S \right) \left( (\phi^{k+1})^2 - S \right)
- \frac{1}{2}   \left( r^{k-\frac{1}{2}} + S \right) \left( (\phi^k - \phi^k)^2 - S \right)  \\
&\quad - \frac{1}{4} \left( r^{k+\frac{1}{2}} \right)^2 +\frac{1}{4}  \left( r^{k-\frac{1}{2}} \right)^2
\end{aligned}
\end{equation}
Thus, it demonstrates that
    \begin{equation}\label{Dis_RRER_CH2_ener}
        \begin{aligned}
        &\left \langle\nabla_\tau \phi^{k+1}, \mu^{k+\frac{1}{2}}\right \rangle
        = \left\langle \varepsilon^2 \Delta \frac{\phi^k + \phi^{k+1}}{2} + \left( \frac{1}{2} r^{k+\frac{1}{2}} + \frac{S}{2} \right) \left( 1 - 2 \frac{\phi^k + \phi^{k+1}}{2} \right), \nabla_\tau \phi^{k+1} \right\rangle \\
        &= \frac{\varepsilon^2}{2} \left( \|\nabla \phi^{k+1}\|^2_{L^2(\Omega)} - \|\nabla \phi^k\|^2_{L^2(\Omega)} \right)
        + \left\langle \frac{1}{2} r^{k+\frac{1}{2}} + \frac{S}{2}, \nabla_\tau \phi^{k+1} \right\rangle  - \left\langle \left( \frac{1}{2} r^{k+\frac{1}{2}} + \frac{S}{2} \right) (\phi^{k+1} + \phi^k), \nabla_\tau \phi^{k+1} \right\rangle\\
        &= \frac{\varepsilon^2}{2} \left( \|\nabla \phi^{k+1}\|^2_{L^2(\Omega)} - \|\nabla \phi^k\|^2_{L^2(\Omega)} \right\rangle
        + \frac{1}{2} \left\langle (r^{k+\frac{1}{2}} + S)\left(\phi^{k+1} - (\phi^{k+1})^2 - S \right), 1 \right\rangle \\
        &\quad - \frac{1}{2} \left\langle (r^{k-\frac{1}{2}} + S)(\phi^k - (\phi^k)^2 - S), 1 \right\rangle
        - \frac{1}{4} \left\langle (r^{k+\frac{1}{2}})^2 - (r^{k-\frac{1}{2}})^2, 1 \right\rangle \\
        &= \tilde E\left[\phi^{k+1},r^{k+\frac{1}{2}}\right] - \tilde E\left[\phi^{k},r^{k-\frac{1}{2}}\right] = -\frac{1}{M}\left( \nabla_\tau\psi^{k+1}, \partial_\tau^\alpha \psi^{k+\frac{1}{2}} \right) .
        \end{aligned}
    \end{equation}

By summing $\tilde E\left[\phi^{k+1},r^{k+\frac{1}{2}}\right] - \tilde E\left[\phi^{k},r^{k-\frac{1}{2}}\right]$ from $k=0$ to $n$, the discrete energy boundedness law (\ref{disenergylawch}) is established for the $L1^+$-CN linear relaxation scheme of the time fractional Cahn-Hilliard equation. The modified variational energy dissipation law (\ref{modified_energy_decay}) is rigorously established by directly employing the discrete gradient structure (\ref{DGS}) from Lemma \ref{lemma_DGS}.

This completes the proof.
\end{proof}

\begin{remark}\label{remark 2.3}
By an argument analogous to the proof of Remark \ref{remark2.1}, the discrete modified energy (\ref{energydisch}) is a second-order accurate approximation in time to the original energy:
    \begin{equation}
    \begin{aligned}
        E\left[ \phi^{n+1} \right] &\approx \tilde{E} \left[ \phi^{n+1}, r^{n+1} \right]\approx \tilde E\left[\phi^{n+1}, r^{n+\frac{1}{2}}\right]+\frac{1}{4}\left\|r^{n+1}-r^{n+\frac{1}{2}}  \right\|_{L^2}^2.
    \end{aligned}
\end{equation}
\end{remark}

\subsection{Model III: Time fractional Swift-Hohenberg equation}
In this section, we will consider the time fractional Swift-Hohenberg equation
\begin{equation}\label{TFSH}
    \partial_t^\alpha \phi = -M \mu,\ \mu = \frac{\delta E}{\delta \phi}.
\end{equation}
The free energy is defined as
\begin{equation}
E[\phi]=\int_{\Omega} \frac{1}{2} \phi(1+\Delta)^2 \phi +F[\phi] \ d \mathbf{x} , \ \text { with }\ F(\phi)=  \frac{1}{4} \phi^4-\frac{g}{3} \phi^3+\frac{\delta}{2} \phi^2 .
\end{equation}
It can be rewritten as
\begin{equation}\label{SHF}
F(\phi) =  \left( \frac{1}{2} \phi^2 - \frac{g}{3} \phi +c_1 \right)^2 + c_2 \phi +c_3 ,
\end{equation}
where $c_1 =\frac{\delta}{2} - \frac{g^2}{9}, c_2 =\frac{g \delta}{3} - \frac{2 g^3}{27} $, and $c_3 = - \left( \frac{\delta}{2} - \frac{g^2}{9} \right)^2$.

We introduce an auxiliary variable
\begin{equation}
    r = \frac{1}{2} \phi^2 - \frac{g}{3} \phi +c_1-S.
\end{equation}
The nonlinear potential term is equivalent to
\begin{equation}
    F[\phi,r] = \int_\Omega (r + S)^2 +c_2\phi+ c_3 \ d\mathbf{x},
\end{equation}
and
\begin{equation}
    F'[\phi] = \int_\Omega 2(r+S) \cdot r'+ c_2 \ d\mathbf{x}, \ r' = \phi - \frac{g}{3}.
\end{equation}
The relaxation scheme for Eq.~(\ref{TFSH}) is reformulated as
\begin{subequations}\label{shrelaxation}
    \begin{align}
        &\partial_t^\alpha \phi  = - M \mu, \label{relx_SH1}\\
        &\mu = \left( 1+\Delta \right)^2 \phi + 2\left(r + S\right) \left(\phi - \frac{g}{3} \right)+c_2, \label{relx_SH2}\\
        &r = \frac{1}{2} \phi^2 - \frac{g}{3} \phi + c_1 - S ,\label{relx_SH3}
    \end{align}
\end{subequations}
with the modified free energy is defined by
\begin{equation}\label{modienergySH}
    \tilde {E} [\phi,r ] := \int_\Omega \frac{1}{2}\phi\left(1 + \Delta \right)^2 \phi + 2 ( r + S )\left(\frac{1}{2} \phi^2 - \frac{g}{3} \phi +c_1 - S \right) - r^2 +c_2 \phi \ d \mathbf{x}  +\left( c_3 +S^2\right) |\Omega|.
\end{equation}

The system (\ref{shrelaxation}) is discretized as follows
\begin{subequations}\label{Dis_SH}
    \begin{align}
        \partial_\tau^\alpha \phi^{n+\frac{1}{2}} & = -M \mu^{n+\frac{1}{2}}, \label{dis_sh1}\\
        \mu^{n+\frac{1}{2}} & = \left( 1+\Delta \right)^2 \phi^{n+\frac{1}{2}} +  2\left(r^{n+\frac{1}{2}} + S \right)\left( \phi^{n+\frac{1}{2}} - \frac{g}{3}\right)+c_2,\label{dis_sh2}\\
        \frac{r^{n+\frac{1}{2}} + r^{n-\frac{1}{2}}}{2} & = \frac{1}{2}\left(\phi^n\right)^2 - \frac{g}{3} \phi^n + c_1 - S.\label{dis_sh3}
    \end{align}
\end{subequations}
The discrete modified energy is defined as
\begin{align}\label{dismodienergySH}
    \tilde {E} [\phi^{n+1},r^{n+\frac{1}{2}} ] &:= \int_\Omega \frac{1}{2}\phi^{n+1}\left(1 + \Delta \right)^2 \phi^{n+1} + 2 ( r^{n+\frac{1}{2}} + S )\left(\frac{1}{2} \left(\phi^{n+1} \right)^2 - \frac{g}{3} \phi^{n+1} +c_1 - S \right) \\
    &\quad - \left(r^{n+\frac{1}{2}} \right)^2 +c_2 \phi^{n+1} \ d \mathbf{x}  +\left( c_3 +S^2\right) |\Omega|.
\end{align}

\begin{theorem}\label{theoremSH}
The $L1^+$-CN linear relaxation scheme (\ref{Dis_SH}) preserves the modified energy law:
    \begin{equation}\label{disenergylawsh}
        \tilde E\left[ \phi^{n+1} , r^{n+\frac{1}{2}}\right]\leq   \tilde E\left[ \phi^{0} , r^{-\frac{1}{2}}\right]=  E\left[ \phi^0\right],\ \text{for}\ n\geq 0,
    \end{equation}
where the discrete modified energy is defined in Eq.~(\ref{dismodienergySH}).

Under the time-step ratio constraint (\ref{time_step_constraint}), it preserves the modified variational energy dissipation property that
\begin{equation} \label{modified_energy_decaysh}
\tilde{E}_\alpha \left[\phi^{n+1}, r^{n+ \frac{1}{2}} \right]-\tilde{E}_\alpha \left[\phi^{n}, r^{n- \frac{1}{2}} \right]=- \frac{1}{M } \mathcal{R}_\alpha \left( \nabla_\tau \phi^{n+1}\right) \leq 0,\ \text{for}\ n \geq 2,
\end{equation}
with
\begin{equation}
\tilde{E}_\alpha \left[ \phi^{n+1}, r^{n+\frac{1}{2}},q^{n+\frac{1}{2}} \right]:=\tilde{E}\left[ \phi^{n+1}, r^{n+\frac{1}{2}},q^{n+\frac{1}{2}} \right] +\frac{1}{M  } \mathcal{A}_\alpha\left( \nabla_\tau \phi^{n+1}\right).
\end{equation}
\end{theorem}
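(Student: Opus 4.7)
The plan is to mirror the proof strategy used in Theorems \ref{theorem_energy_TFAC} and \ref{theorem_energy_TFCH}, adapting the algebraic identities to the Swift-Hohenberg nonlinearity. The starting point is to take the $L^2$ inner product of Eq.~(\ref{dis_sh1}) with $\nabla_\tau \phi^{k+1}$ for $0 \le k \le n$, so that the left-hand side becomes $-\frac{1}{M}\langle \partial_\tau^\alpha \phi^{k+\frac{1}{2}}, \nabla_\tau \phi^{k+1}\rangle$. What remains is to show that $\langle \mu^{k+\frac{1}{2}}, \nabla_\tau \phi^{k+1}\rangle = \tilde{E}[\phi^{k+1},r^{k+\frac{1}{2}}] - \tilde{E}[\phi^k, r^{k-\frac{1}{2}}]$ with the modified energy defined in (\ref{dismodienergySH}).

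To see this, I would treat the three contributions to $\mu^{k+\frac{1}{2}}$ in Eq.~(\ref{dis_sh2}) separately. The biharmonic piece $\langle (1+\Delta)^2 \phi^{k+\frac{1}{2}}, \nabla_\tau \phi^{k+1}\rangle$ telescopes to $\frac{1}{2}\langle \phi^{k+1},(1+\Delta)^2\phi^{k+1}\rangle - \frac{1}{2}\langle \phi^k,(1+\Delta)^2\phi^k\rangle$ by self-adjointness of $(1+\Delta)^2$. The constant $c_2$ contributes $c_2\nabla_\tau\phi^{k+1}$ directly. The nonlinear piece is the crux: setting $V^k := \frac{1}{2}(\phi^k)^2 - \frac{g}{3}\phi^k + c_1 - S$, a direct expansion shows
\begin{equation*}
2(r^{k+\frac{1}{2}}+S)\Bigl(\phi^{k+\frac{1}{2}}-\tfrac{g}{3}\Bigr)\nabla_\tau \phi^{k+1} = 2(r^{k+\frac{1}{2}}+S)(V^{k+1}-V^k).
\end{equation*}
Then, exploiting the relaxation identity (\ref{dis_sh3}) in the form $V^k = \tfrac{1}{2}(r^{k+\frac{1}{2}}+r^{k-\frac{1}{2}})$, I would rewrite
\begin{equation*}
2(r^{k+\frac{1}{2}}+S)V^{k+1} - 2(r^{k-\frac{1}{2}}+S)V^k = 2(r^{k+\frac{1}{2}}+S)(V^{k+1}-V^k) + (r^{k+\frac{1}{2}})^2 - (r^{k-\frac{1}{2}})^2,
\end{equation*}
which is precisely the discrete time-difference of the nonlinear plus $-r^2$ pieces of the modified energy in (\ref{dismodienergySH}). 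Collecting the three parts gives $\langle\mu^{k+\frac{1}{2}},\nabla_\tau\phi^{k+1}\rangle = \tilde{E}[\phi^{k+1},r^{k+\frac{1}{2}}] - \tilde{E}[\phi^k, r^{k-\frac{1}{2}}]$.

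With this identity in hand, the energy boundedness (\ref{disenergylawsh}) follows by summing from $k=0$ to $n$ and invoking the positive semi-definiteness of the $L1^+$ discrete convolution kernel (\cite[Corollary 2.1]{tang2019energy}) to conclude $\sum_{k=0}^n \langle \partial_\tau^\alpha \phi^{k+\frac{1}{2}}, \nabla_\tau \phi^{k+1}\rangle \geq 0$, together with the initialization $\tilde{E}[\phi^0,r^{-\frac{1}{2}}] = E[\phi^0]$, which holds by the chosen extension $r^{-\frac{1}{2}} = \frac{1}{2}(\phi^0)^2 - \frac{g}{3}\phi^0 + c_1 - S$. For the variational energy dissipation (\ref{modified_energy_decaysh}), under the step-ratio constraint (\ref{time_step_constraint}) I would apply the discrete gradient structure (\ref{DGS}) of Lemma~\ref{lemma_DGS} with the index shift from $\alpha$ to $1-\alpha$ appropriate for the $L1^+$ formula, yielding
\begin{equation*}
\langle \partial_\tau^\alpha \phi^{n+\frac{1}{2}}, \nabla_\tau \phi^{n+1}\rangle = \mathcal{A}_{1-\alpha}(\nabla_\tau\phi^{n+1}) - \mathcal{A}_{1-\alpha}(\nabla_\tau\phi^n) + \mathcal{R}_{1-\alpha}(\nabla_\tau\phi^{n+1}),
\end{equation*}
and the nonnegativity of $\mathcal{R}_{1-\alpha}$ closes the argument.

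The main obstacle I anticipate is the bookkeeping of the algebraic manipulation for the cubic $\phi$ and linear $\phi$ contributions of the Swift-Hohenberg potential; in contrast to the Allen-Cahn and Cahn-Hilliard cases, one must simultaneously handle the shift by $g/3$ from the cubic term, the constants $c_1, c_2, c_3$ arising from the completion-of-square reformulation (\ref{SHF}), and the $S$-stabilization, and verify that all extraneous constant-in-$\phi$ terms cancel when the sums are telescoped. Once the identity $\langle \mu^{k+\frac{1}{2}},\nabla_\tau \phi^{k+1}\rangle = \tilde{E}^{k+1} - \tilde{E}^k$ is established, the remainder of the proof is a direct transcription of the arguments in Theorems \ref{theorem_energy_TFAC} and \ref{theorem_energy_TFCH}.
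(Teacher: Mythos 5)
Your proposal is correct and follows essentially the same route as the paper: take the inner product of the scheme with $\nabla_\tau\phi^{k+1}$, use self-adjointness of $(1+\Delta)^2$ for the linear part, use the relaxation identity (\ref{dis_sh3}) (your $V^k=\tfrac{1}{2}(r^{k+\frac12}+r^{k-\frac12})$ rewriting is just a tidier bookkeeping of the paper's expansion (\ref{interf_term3})) to telescope the nonlinear part into $\tilde E[\phi^{k+1},r^{k+\frac12}]-\tilde E[\phi^k,r^{k-\frac12}]$, then sum for the boundedness and invoke the discrete gradient structure of Lemma~\ref{lemma_DGS} with the $1-\alpha$ kernels under the step-ratio constraint. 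Your explicit appeal to positive semi-definiteness of the discrete kernel for the summed bound, and your midpoint identity for the biharmonic term (without the extra $\|\nabla_\tau(1+\Delta)\phi^{n+1}\|^2$ term that appears in the paper's (\ref{interf_term1})), are if anything slightly more careful than the paper's own write-up.
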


\begin{proof}
Take the inner product of Eq.~(\ref{dis_sh1}) and Eq.~(\ref{dis_sh2}) with $\nabla_\tau \phi^{n+1}$, and sum them together. There is
\begin{equation}\label{SH_analy}
    \begin{aligned}
        &-\frac{1}{M}\left\langle\partial_t^\alpha \phi^{n+\frac{1}{2}}, \nabla_\tau \phi^{n+1} \right\rangle \\
        &= \left \langle\left( 1+\Delta \right)^2 \phi^{n+\frac{1}{2}}, \nabla_\tau \phi^{n+1}\right \rangle +  \left \langle 2\left(r^{n+\frac{1}{2}} + S \right) \left(\phi^{n+\frac{1}{2}}-\frac{g}{3} \right)+c_2, \nabla_\tau \phi^{n+1}\right \rangle.
    \end{aligned}
\end{equation}
 For the first term on the RHS of Eq.~(\ref{SH_analy}), we have
\begin{equation}\label{interf_term1}
    \begin{aligned}
        \left \langle \left( 1+\Delta \right)^2  \phi^{n+\frac{1}{2}}, \nabla_\tau \phi^{n+1}\right \rangle &= \left \langle \left( 1+\Delta \right)  \phi^{n+\frac{1}{2}},\nabla_\tau  \left( 1+\Delta \right) \phi^{n+1}\right \rangle\\
        & = \frac{1}{2} \left( \left \| \left( 1+\Delta \right) \phi^{n+1} \right\|^2 - \left \| \left( 1+\Delta \right) \phi^{n } \right\|^2 + \left \| \nabla_\tau\left( 1+\Delta \right) \phi^{n+1} \right\|^2\right).
    \end{aligned}
\end{equation}
For the second term of RHS, we have
\begin{equation}\label{interf_term3}
    \begin{aligned}
         &\left \langle 2\left(r^{n+\frac{1}{2}} + S \right) \left(\phi^{n+\frac{1}{2}}-\frac{g}{3} \right)+c_2, \nabla_\tau \phi^{n+1}\right \rangle \\
         &= \left \langle   2\left( r^{n+ \frac{1}{2}} +S \right) \left( \frac{\phi^{n+1}+ \phi^n}{2} - \frac{g}{3}\right)  , \nabla_\tau \phi^{n+1}  \right \rangle + \left \langle  c_2\phi^{n+1} - c_2\phi^n,1 \right \rangle\\
         & =  \left \langle  2  \left( r^{n+ \frac{1}{2}} +S \right)  \left(\left( \frac{1}{2} \left(\phi^{n+1}\right)^2  -\frac{g}{3}\phi^{n+1} +c_1 - S \right)- \left(\frac{1}{2} \left(\phi^{n }\right)^2-\frac{g}{3}\phi^n+c_1 - S\right)    \right) ,1 \right \rangle\\
         &\quad +  \left \langle  2  \left( r^{n- \frac{1}{2}} +S \right) \left( \left(\frac{1}{2} \left(\phi^{n }\right)^2-\frac{g}{3}\phi^n+c_1 - S\right) -\left(\frac{1}{2} \left(\phi^{n }\right)^2-\frac{g}{3}\phi^n+c_1 - S\right)  \right) ,1 \right \rangle\\
         &\quad + \left \langle  c_2\phi^{n+1} - c_2\phi^n,1 \right \rangle\\
         & =  \left \langle 2   \left( r^{n+ \frac{1}{2}} +S \right)  \left( \frac{1}{2} \left(\phi^{n+1}\right)^2  -\frac{g}{3}\phi^{n+1} +c_1 - S \right)-\left(r^{n+\frac{1}{2}} \right)^2 +c_2 \phi^{n+1},1 \right \rangle  \\
         &-\left \langle   2 \left( r^{n- \frac{1}{2}} +S \right)  \left( \frac{1}{2} \left(\phi^{n }\right)^2  -\frac{g}{3}\phi^{n } +c_1 - S \right)- \left(r^{n- \frac{1}{2}}   \right) ^2 +c_2 \phi^n ,1 \right \rangle  .
    \end{aligned}
\end{equation}

By using Eq.~(\ref{SH_analy}), Eq.~(\ref{interf_term1}), and Eq.~(\ref{interf_term3}), we derive that
\begin{equation}
    \tilde{E}\left[\phi^{n+1}, r^{n+\frac{1}{2}} \right] - \tilde{E}\left[\phi^{n }, r^{n-\frac{1}{2}} \right] = -\frac{1}{M}\left\langle\partial_t^\alpha \phi^{n+\frac{1}{2}}, \nabla_\tau \phi^{n+1} \right\rangle.
\end{equation}
Thus, the proof can be completed by following the same argument used at the end of the proof of Theorem \ref{theorem_energy_TFAC}.
\end{proof}

\begin{remark}\label{remark 2.5}
This remark clarifies the relationship between the modified discrete energy and its original counterpart.
    \begin{equation}
        \begin{aligned}\label{remarkac}
         &E\left[\phi^{n+1} \right] \approx \tilde E\left[ \phi^{n+1}, r^{n+1} \right]\\
         &=  \left \langle \frac{1}{2}\phi^{n+1}\left(1 + \Delta \right)^2 \phi^{n+1} +  2 \left( r^{n+1} + S \right)\left(\frac{1}{2} \left(\phi^{n+1} \right)^2 - \frac{g}{3} \phi^{n+1} +c_1 - S \right)- \left(r^{n+1} \right)^2  ,1 \right \rangle \\
         & \quad + \left \langle c_2 \phi^{n+1} , 1\right \rangle + \left( c_3 + S^2 \right)|\Omega|\\
         &=   \left \langle \frac{1}{2}\phi^{n+1}\left(1 + \Delta \right)^2 \phi^{n+1} , 1\right \rangle + \left \langle 2 \left( r^{n+\frac{1}{2}} + S \right)\left(\frac{1}{2} \left(\phi^{n+1} \right)^2 - \frac{g}{3} \phi^{n+1} +c_1 - S \right)- \left(r^{n+\frac{1}{2}} \right)^2,1 \right \rangle \\
         &\quad + \left \langle c_2 \phi^{n+1} , 1\right \rangle + \left( c_3 + S^2 \right)|\Omega|+ \left \langle 2 \left( r^{n+1} -r^{n+\frac{1}{2}} \right)\left(\frac{1}{2} \left(\phi^{n+1} \right)^2 - \frac{g}{3} \phi^{n+1} +c_1 - S \right) ,1 \right \rangle \\
         & - \left \langle \left( r^{n+1}\right)^2- \left(r^{n+ \frac{1}{2}} \right)^2,1 \right \rangle\\
        & = \tilde {E}\left[\phi^{n+1}, r^{n+\frac{1}{2}}  \right]+ \left \|    r^{n+1} -r^{n+\frac{1}{2}}   \right \|^2.
    \end{aligned}
    \end{equation}
The above formulation shows that the modified discrete energy is a second-order approximation of its original counterpart.
\end{remark}

\begin{remark}\label{remark 2.6}
(Asymptotic compatibility)
Using the analysis framework established in Remark \ref{remark 2.2}, we can show that the linear relaxation scheme (\ref{Dis_SH}) for the time-fractional Swift-Hohenberg equation reduces to the CN scheme of the integral counterpart as $\alpha  \rightarrow  1^-$. Its variational energy dissipation law (\ref{modified_energy_decaysh}) is asymptotically compatible with their integer-order counterparts that
\begin{equation}\label{acintegralenergy}
    \partial_\tau \tilde E\left[\phi^n , r^{n-\frac{1}{2}} \right] = -M^{-1}\left\| \partial_\tau \phi^n \right\|^2  = -M \left\| \mu^{n-\frac{1}{2}}   \right\|^2, \ 1 \leq n \leq N.
\end{equation}

\end{remark}

\section{Numerical results}
\label{section4}
This section presents several numerical examples to illustrate the effectiveness of the proposed numerical method. We consider the periodic boundary condition and employ the Fourier spectral method with a $128 \times 128$ mesh for spatial discretization. Without specific needs, the stability parameter is fixed as $S=2$.

\begin{example}\label{example1}
(Convergence tests)
We first present an example in which an appropriate source term is selected so that the time-fractional volume-conserved Allen-Cahn equation, Cahn-Hilliard equation, and Swift-Hohenberg equation admit the following exact solution:
\begin{equation}
    \phi(x,y,t) = \left(1-\omega_{1+\sigma}(t)\right)\left( \frac{1}{4}\sin(2x)\cos(2y) + 0.45\right),\ t\in[0,T].
\end{equation}
\end{example}
We use graded temporal meshes $t_n = \left(\frac{n}{N}\right)^{\gamma}T$ for different values of $\alpha$. The mobility coefficient $M=0.01$. Numerical results are listed from Table \ref{tab:CACsigma06} to Table \ref{tab:SH}. For the time-fractional volume-conserved Allen-Cahn and Cahn-Hilliard models, we set the interface width $\varepsilon=0.25$. For the time-fractional Swift-Hohenberg equation, the parameters are chosen as $ g=1$ and $ \delta=0.2.$

Tables \ref{tab:CACsigma06} and \ref{tab:CACsigma03} show the convergence rates of the time-fractional volume-conserved Allen-Cahn equation with different $\alpha$ and $\sigma$ on graded grids with different values of the graded parameter $\gamma$. The numerical results demonstrate that scheme (\ref{Dis_RRER_AC}) achieves a temporal convergence order of $O(N^{-\min\{\gamma \sigma,2\}})$ for both $\phi$ and the auxiliary variable $r$. This convergence rate is independent of $\alpha$ and aligns with the findings reported in \cite{ji2020adaptive}.
Specifically, when $\gamma < \frac{2}{\sigma}$, the temporal convergence order becomes $O(N^{-\gamma \sigma})$. To attain the optimal second-order temporal convergence, the graded parameter should satisfy $\gamma \geq \frac{2}{\sigma}$.

Tables \ref{tab:CH} and \ref{tab:SH} show the convergence rates resulting from tests of the time-fractional Cahn–Hilliard and time-fractional Swift–Hohenberg equations, respectively, with the optimal mesh partition ($\gamma=2/\sigma$). It can be seen that both of the variables computed in schemes (\ref{Dis_RRER_CH}) and (\ref{Dis_SH})  linearly achieve second-order convergence for different fractional-order parameter $\alpha$. This confirms the convergence behavior of $O(N)^{-\min\{\gamma \sigma,2\}}$ observed in the numerical tests presented in Tables \ref{tab:CACsigma06} and \ref{tab:CACsigma03}.

\begin{table}[H]
\centering
\caption{$L^\infty$-errors and convergence rates of $\phi$ and $r$ for the relaxation scheme (\ref{Dis_RRER_AC}) of the time-fractional volume-conserved Allen-Cahn equation with $(\alpha,\sigma)=(0.4,0.6)$ and $T = 1$.}
\begin{tabular}{c c c c c c c c}
\toprule
 & & \multicolumn{2}{c}{$\gamma=2$} & \multicolumn{2}{c}{$\gamma=2/\sigma$} & \multicolumn{2}{c}{$\gamma=7$} \\
\cmidrule(lr){3-4} \cmidrule(lr){5-6} \cmidrule(lr){7-8}
 & $N$ & Error & Order & Error & Order & Error & Order \\
\midrule
\multirow{4}{*}{$\phi$} & 8   & $1.69E{-2}$ & -- & $3.22E{-3}$ & -- & $3.37E{-3}$ & -- \\
 & 16  & $1.11E{-2}$ & 0.60 & $8.23E{-4}$ & 1.97 & $8.55E{-4}$ & 1.98 \\
 & 32  & $7.35E{-3}$ & 0.60 & $2.07E{-4}$ & 1.99 & $2.11E{-4}$ & 2.02 \\
 & 64  & $4.85E{-3}$ & 0.60 & $5.17E{-5}$ & 2.00 & $5.43E{-5}$ & 1.96 \\
\midrule
\multirow{4}{*}{$r$} & 8   & $1.35E{-1}$ & -- & $8.59E{-3}$ & -- & $8.23E{-3}$ & -- \\
 & 16  & $9.28E{-2}$ & 0.54 & $2.43E{-3}$ & 1.82 & $2.30E{-3}$ & 1.84 \\
 & 32  & $6.28E{-2}$ & 0.56 & $6.52E{-4}$ & 1.90 & $6.00E{-4}$ & 1.94 \\
 & 64  & $4.21E{-2}$ & 0.58 & $1.69E{-4}$ & 1.95 & $1.51E{-4}$ & 1.99 \\
\bottomrule
\end{tabular}
\label{tab:CACsigma06}
\end{table}

\begin{table}[H]
\centering
\caption{$L^\infty$-errors and convergence rates of $\phi$ and $r$ for the relaxation scheme (\ref{Dis_RRER_AC}) of the time-fractional volume-conserved Allen-Cahn equation with $(\alpha,\sigma)=(0.7,0.3)$ and $T = 1$.}
\begin{tabular}{c c c c c c c c}
\toprule
 & & \multicolumn{2}{c}{$\gamma=1$} & \multicolumn{2}{c}{$\gamma=2/\sigma$} & \multicolumn{2}{c}{$\gamma=4$} \\
\cmidrule(lr){3-4} \cmidrule(lr){5-6} \cmidrule(lr){7-8}
 & $N$ & Error & Order & Error & Order & Error & Order \\
\midrule
\multirow{4}{*}{$\phi$} & 8   & $2.67E{-2}$ & -- & $8.36E{-4}$ & -- & $1.17E{-3}$ & -- \\
 & 16  & $1.75E{-2}$ & 0.60 & $2.12E{-4}$ & 1.98 & $2.96E{-4}$ & 1.98 \\
 & 32  & $1.15E{-2}$ & 0.60 & $5.34E{-5}$ & 1.99 & $7.47E{-5}$ & 1.99 \\
 & 64  & $7.60E{-3}$ & 0.60 & $1.34E{-5}$ & 1.99 & $1.88E{-5}$ & 1.99 \\
\midrule
\multirow{4}{*}{$r$} & 8   & $2.08E{-1}$ & -- & $2.44E{-3}$ & -- & $2.11E{-3}$ & -- \\
 & 16  & $1.47E{-1}$ & 0.50 & $6.27E{-4}$ & 1.96 & $5.05E{-4}$ & 2.06 \\
 & 32  & $1.03E{-1}$ & 0.52 & $1.58E{-4}$ & 1.99 & $1.32E{-4}$ & 1.93 \\
 & 64  & $7.18E{-2}$ & 0.52 & $3.95E{-5}$ & 2.00 & $3.49E{-5}$ & 1.92 \\
\bottomrule
\end{tabular}
\label{tab:CACsigma03}
\end{table}

\begin{table}[H]
\centering
\caption{$L^\infty$-errors and convergence rates of $\phi$ and $r$ for the relaxation scheme (\ref{Dis_RRER_CH}) of the time-fractional Cahn-Hilliard equation with $\sigma =2$, $\gamma =2/\sigma$, and $T = 1$.}
\begin{tabular}{c c c c c c c c c c}
\toprule
 & & \multicolumn{2}{c}{$\alpha = 0.3$} & \multicolumn{2}{c}{$\alpha = 0.6$} & \multicolumn{2}{c}{$\alpha = 0.9$} & \multicolumn{2}{c}{$\alpha = 1$} \\
\cmidrule(lr){3-4} \cmidrule(lr){5-6} \cmidrule(lr){7-8} \cmidrule(lr){9-10}
 & $N$ & Error & Order & Error & Order & Error & Order & Error & Order \\
\midrule
\multirow{4}{*}{$\phi$} & 8   & $2.09E{-2}$ & -- & $1.65E{-2}$ & -- & $8.17E{-3}$ & -- & $3.97E{-3}$ & -- \\
 & 16  & $5.44E{-3}$ & 1.94 & $4.48E{-3}$ & 1.88 & $2.28E{-3}$ & 1.84 & $1.01E{-3}$ & 1.98 \\
 & 32  & $1.39E{-3}$ & 1.97 & $1.18E{-3}$ & 1.93 & $6.19E{-4}$ & 1.88 & $2.52E{-4}$ & 2.00 \\
 & 64  & $3.48E{-4}$ & 1.99 & $3.04E{-4}$ & 1.95 & $1.66E{-4}$ & 1.90 & $6.31E{-5}$ & 2.00 \\
\midrule
\multirow{4}{*}{$r$} & 8   & $2.33E{-2}$ & -- & $2.13E{-2}$ & -- & $1.75E{-2}$ & -- & $1.57E{-2}$ & -- \\
 & 16  & $5.87E{-3}$ & 1.99 & $5.39E{-3}$ & 1.99 & $4.28E{-3}$ & 2.04 & $3.64E{-3}$ & 2.11 \\
 & 32  & $1.45E{-3}$ & 2.01 & $1.36E{-3}$ & 1.99 & $1.09E{-3}$ & 1.98 & $9.09E{-4}$ & 2.00 \\
 & 64  & $3.66E{-4}$ & 1.99 & $3.43E{-4}$ & 1.98 & $2.76E{-4}$ & 1.98 & $2.69E{-4}$ & 1.76 \\
\bottomrule
\end{tabular}
\label{tab:CH}
\end{table}

\begin{table}[H]
\centering
\caption{$L^\infty$-errors and convergence rates of $\phi$, $r$, and $q$ for the relaxation scheme of the time-fractional Swift-Hohenberg equation with $\sigma =2$, $\gamma =2/\sigma$, and $T=1$.}
\begin{tabular}{c c c c c c c c c c}
\toprule
 & & \multicolumn{2}{c}{$\alpha = 0.3$} & \multicolumn{2}{c}{$\alpha = 0.6$} & \multicolumn{2}{c}{$\alpha = 0.9$} & \multicolumn{2}{c}{$\alpha = 1$} \\
\cmidrule(lr){3-4} \cmidrule(lr){5-6} \cmidrule(lr){7-8} \cmidrule(lr){9-10}
 & $N_t$ & Error & Order & Error & Order & Error & Order & Error & Order \\
\midrule
\multirow{4}{*}{$\phi$} & 8   & 3.46E{-3} & --    & 2.21E{-3} & --    & 1.06E{-3} & --    & 2.82E{-4} & --    \\
 & 16  & 8.64E{-4} & 2.00  & 5.47E{-4} & 2.01  & 2.85E{-4} & 1.90  & 7.05E{-5} & 2.00  \\
 & 32  & 2.15E{-4} & 2.00  & 1.36E{-4} & 2.01  & 7.56E{-5} & 1.91  & 1.76E{-5} & 2.00  \\
 & 64  & 5.37E{-5} & 2.00  & 3.38E{-5} & 2.01  & 1.99E{-5} & 1.92  & 4.41E{-6} & 2.00  \\
\midrule
\multirow{4}{*}{$r$} & 8   & 6.10E{-3} & --    & 6.08E{-3} & --    & 5.95E{-3} & --    & 5.86E{-3} & --    \\
 & 16  & 1.91E{-3} & 1.68  & 1.90E{-3} & 1.67  & 1.87E{-3} & 1.67  & 1.83E{-3} & 1.68  \\
 & 32  & 5.29E{-4} & 1.85  & 5.28E{-4} & 1.85  & 5.17E{-4} & 1.85  & 5.07E{-4} & 1.85  \\
 & 64  & 1.39E{-4} & 1.93  & 1.39E{-4} & 1.93  & 1.36E{-4} & 1.93  & 1.33E{-4} & 1.93  \\
\bottomrule
\end{tabular}
\label{tab:SH}
\end{table}

\begin{example}\label{example2}
In this example, we consider the time-fractional volume-conserved Allen-Cahn and Cahn-Hilliard equations.
The adaptive time-stepping algorithm is used to improve computational efficiency. Given a coefficient $\lambda$, the minimum time $\tau_{\min}=10^{-3}$ and the maximum time step $\tau_{\max}=0.5$, we can update the time step size by the formula
\begin{equation}\label{adaptive_time_size}
    \tau_{n+1} = \max\left\{ \tilde{\tau}_n, H_{1-\alpha}\left(\rho_n \right)\tau_n \right\}, \ \tilde{\tau}_n = \max \left\{ \tau_{\min}, \frac{\tau_{\max}}{\sqrt{1+ \lambda \left\| \partial_\tau \phi^n\right\|^2}}\right\},
\end{equation}
where $H_{\alpha}(\cdot)$ defined in (\ref{time_step_constraint}). This adaptive approach has recently been used in other phase-field models \cite{qi2024unified}. We set a random initial condition distributed in the field of $(-0.2, 0.2)$ on the spatial domain $[0,2\pi]^2$ to test the two models. The parameters are chosen as $\lambda = 10^2$, $M=0.01$, and $\varepsilon = 0.25$ and $0.5$ for the time-fractional volume-conserved Allen-Cahn equation and Cahn-Hilliard equations, respectively.
\end{example}

Figure \ref{fig:CAC} shows the numerical results of the scheme (\ref{Dis_RRER_AC}) for the time-fractional volume-conserved Allen-Cahn equation over time $t=500$ with $\alpha = 0.4, 0.7, 1$. We can see from Figure (\ref{fig:CAC_Orig_Energy}) that for different $\alpha$, the original discrete energy $E$ and the modified discrete energy $\tilde E$ both decay rapidly at initial times and then gradually reach a steady state, with the energy decreasing monotonically throughout the entire process. This indicates that the developed scheme is energy stable. Furthermore, we note that the modified energy is highly consistent with the original energy, which verifies the statement of remark \ref{remark2.1}.
Figure (\ref{fig:CAC_Modi_Energy}) compares the modified discrete energy and the variational discrete energy, indicating that the variational energy tends to the modified energy as $\alpha\rightarrow 1$. In particular, the coarsening time increases as the fractional-order parameter $\alpha$ decreases, which is further verified in Figure \ref{fig:CAC_Snapshots}. Figure (\ref{fig:CAC_Mass_Conservation}) shows that the computed mass error fluctuates within a very small range, which can illustrate that the proposed scheme satisfies mass conservation. Figure (\ref{fig:CAC_Adaptive_Time_Size}) illustrates the evolution of time-step sizes selected by the adaptive strategy, which chooses smaller steps during significant energy changes and larger steps during minimal changes, effectively capturing the time-multiscale behavior. Furthermore, we investigate the $L^2$-error between the auxiliary variable $r$ and its original counterpart $\phi^2-1-S$. As shown in Figure (\ref{fig:CAC_r_eror}), the error is related to the phase transition rate. The error increases when the phase changes abruptly, and decreases as it gradually approaches a steady state. However, for the IEQ and SAV methods, since the auxiliary variable $r$ is obtained by solving a differential equation, the numerical errors between the original and auxiliary variables or between the original energy and the modified energy accumulate over time (see \cite{jiang2022improving}), which is well known in the community. We note that the proposed linear relaxation scheme introduces an algebraic equation to solve the auxiliary variable, so that the numerical inconsistency between the original and auxiliary variables does not deteriorate over time. This provides a new perspective for designing structure-preserving numerical schemes for phase-field models in the future.

Figures (\ref{fig:CH_Orig_Energy}) and (\ref{fig:CH_Modi_Energy}) illustrate the temporal evolution of the original energy, the modified energy, and the variational energy for the scheme (\ref{Dis_RRER_CH}) applied to the Cahn-Hilliard equation, which are consistent with the energy evolution behavior observed in Figure \ref{fig:CAC}. As demonstrated in Figure (\ref{fig:CH_r_eror}), the error of the auxiliary variable $r$ does not accumulate over time.

\begin{figure}[H]
  \centering
  \begin{subfigure}[b]{0.40\textwidth}
    \centering
    \includegraphics[width=\textwidth]{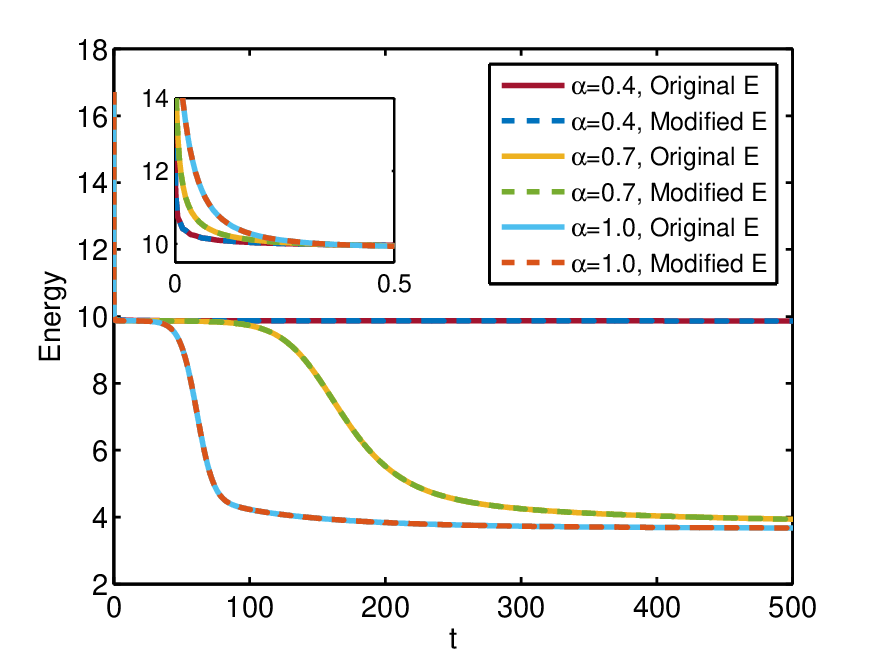}
    \caption{}
    \label{fig:CAC_Orig_Energy}
  \end{subfigure}
  \hfill
    \begin{subfigure}[b]{0.40\textwidth}
      \centering
      \includegraphics[width=\textwidth]{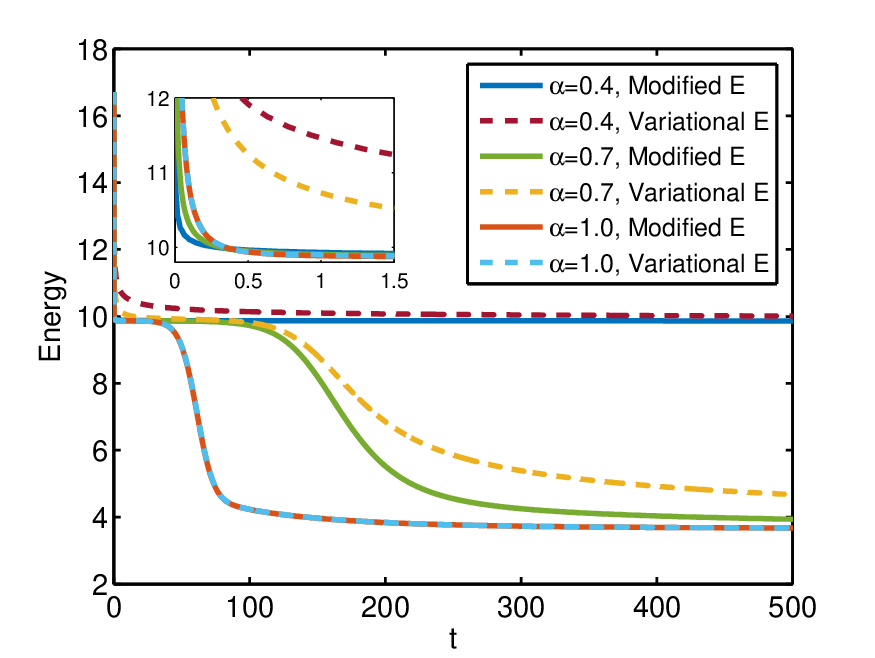}
      \caption{}
      \label{fig:CAC_Modi_Energy}
    \end{subfigure}  \hfill
    \begin{subfigure}[b]{0.33\textwidth}
      \centering
      \includegraphics[width=\textwidth]{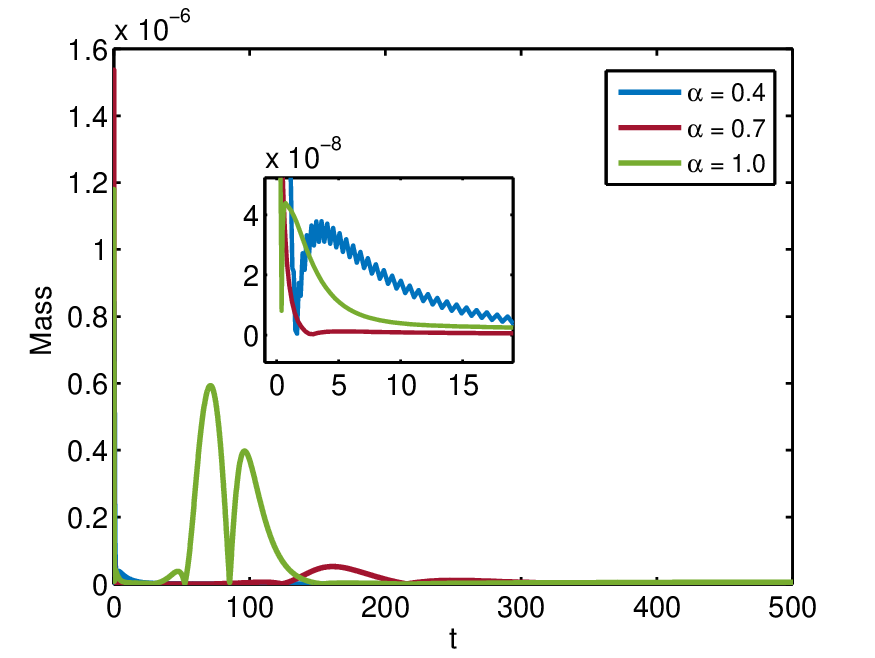}
      \caption{}
      \label{fig:CAC_Mass_Conservation}
    \end{subfigure}
    \hfill
    \begin{subfigure}[b]{0.33\textwidth}
      \centering
      \includegraphics[width=\textwidth]{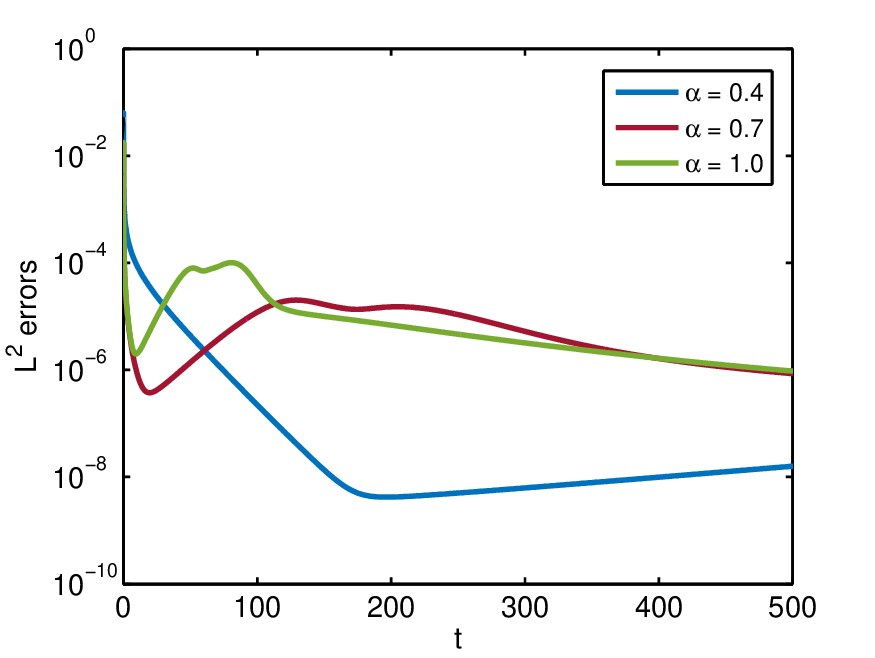}
      \caption{}
      \label{fig:CAC_r_eror}
    \end{subfigure}\hfill
    \begin{subfigure}[b]{0.33\textwidth}
      \centering
      \includegraphics[width=\textwidth]{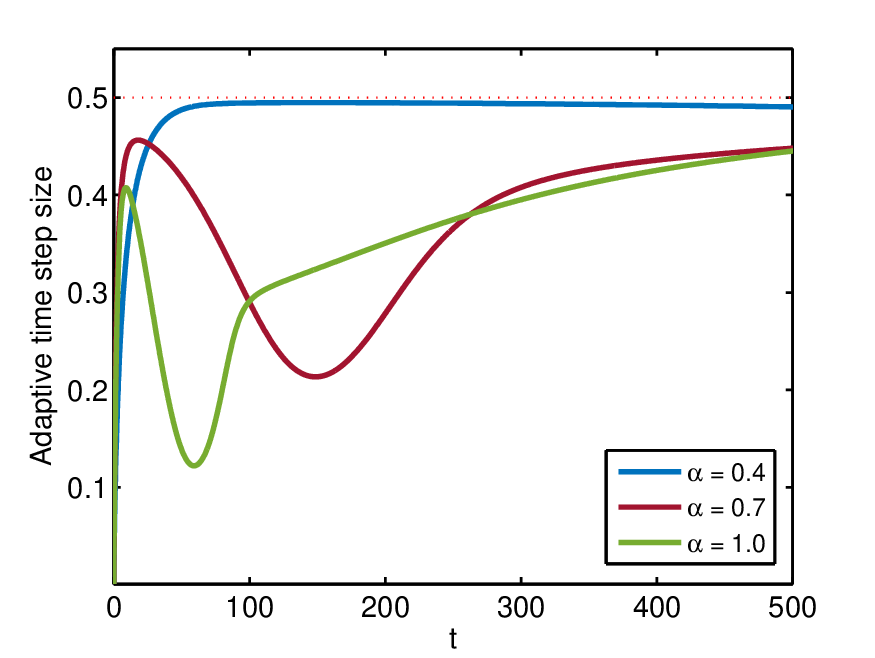}
      \caption{}
      \label{fig:CAC_Adaptive_Time_Size}
    \end{subfigure}
  \caption{The time-fractional volume-conserved Allen-Cahn equation. (a) The original energy and the modified energy. (b) The modified energy and the variational energy. (c) Mass results of $\int_\Omega (\phi^n -\phi^0) d \mathbf{x}$. (d) The $L^2$-error between $r^{n+1/2}$ and $\phi^{n+1/2} - 1 - S$. (e) Adaptive time-step size.
}
  \label{fig:CAC}
\end{figure}

\begin{figure}[H]
\centering
\includegraphics[width=1\textwidth ]{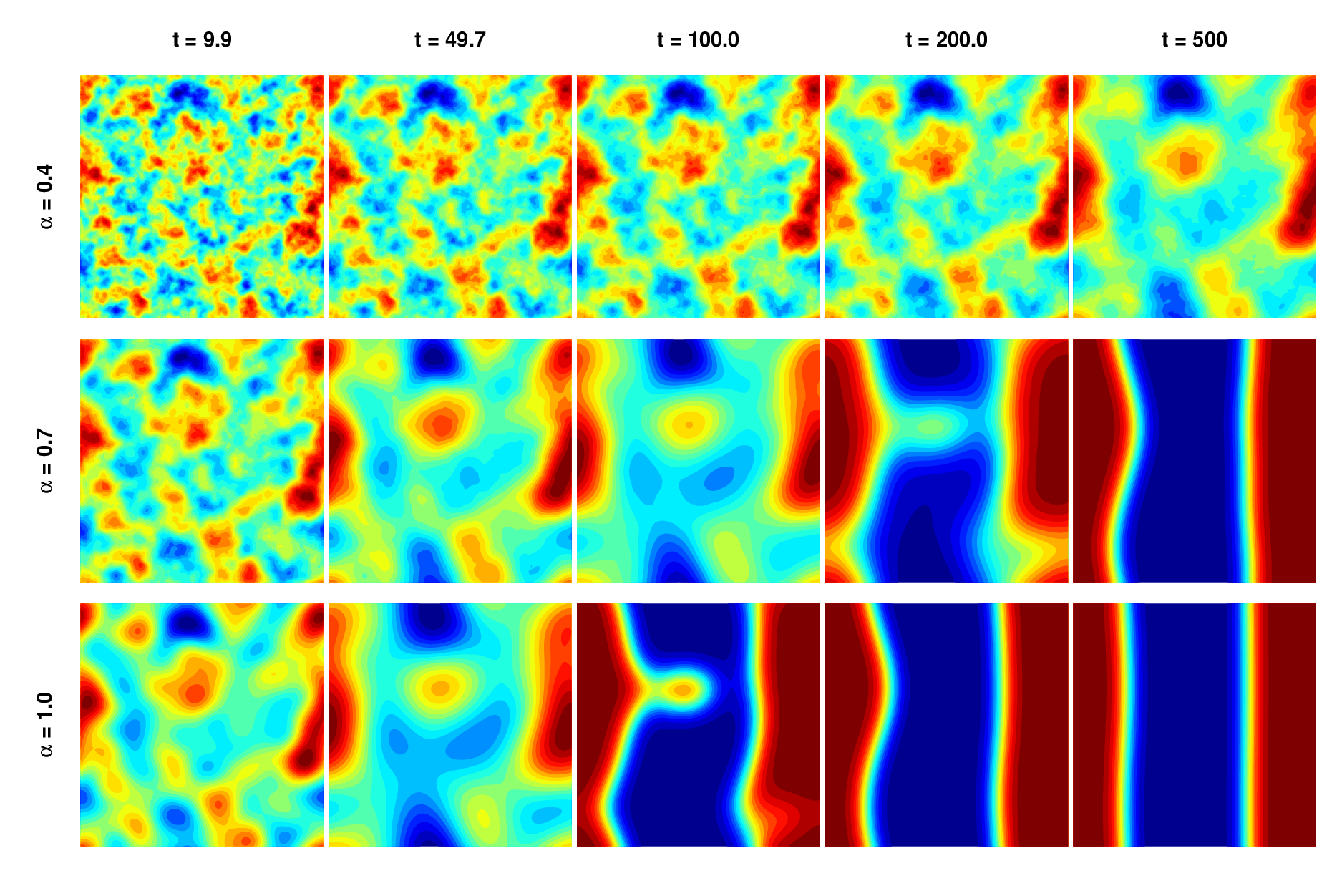}
\caption{The snapshots of the time-fractional volume-conserved Allen-Cahn equation with different $\alpha$.}
\label{fig:CAC_Snapshots}
\end{figure}

\begin{figure}[htpb]
  \centering
   \begin{subfigure}[b]{0.32\textwidth}
    \centering
    \includegraphics[width=\textwidth]{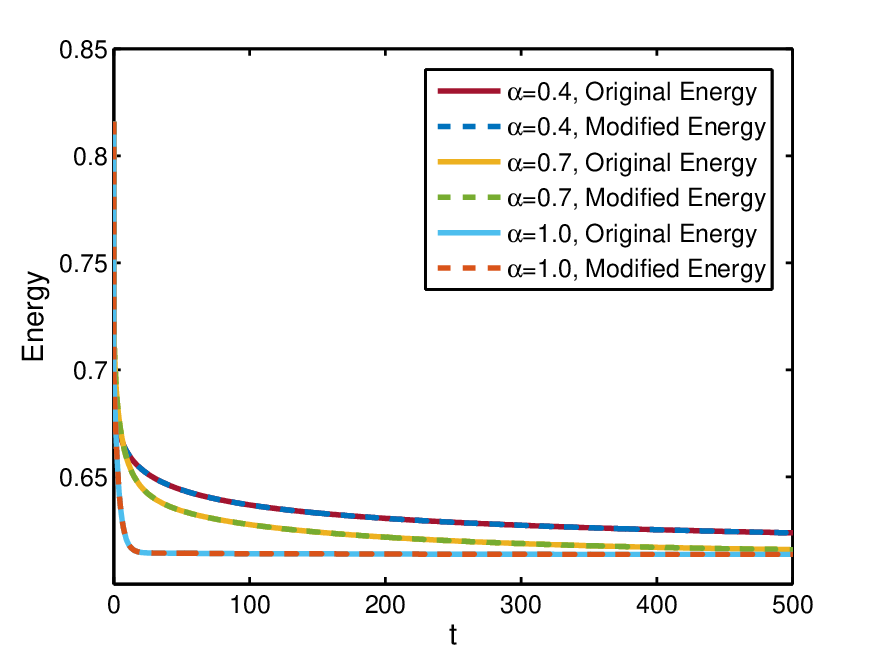}
    \caption{}
    \label{fig:CH_Orig_Energy}
  \end{subfigure}
    \hfill
    \begin{subfigure}[b]{0.32\textwidth}
      \centering
      \includegraphics[width=\textwidth]{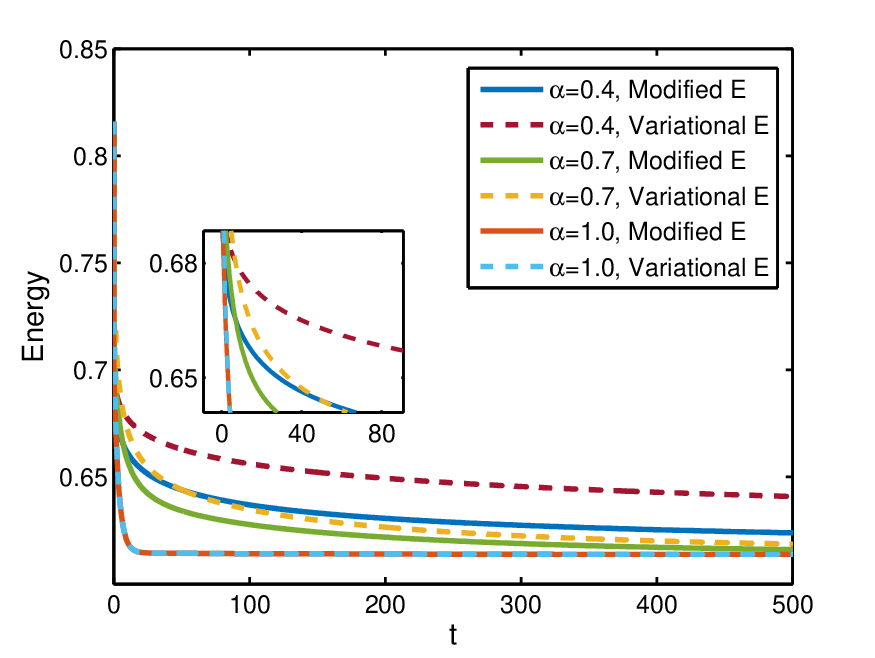}
      \caption{}
      \label{fig:CH_Modi_Energy}
    \end{subfigure}
        \hfill
    \begin{subfigure}[b]{0.32\textwidth}
      \centering
      \includegraphics[width=\textwidth]{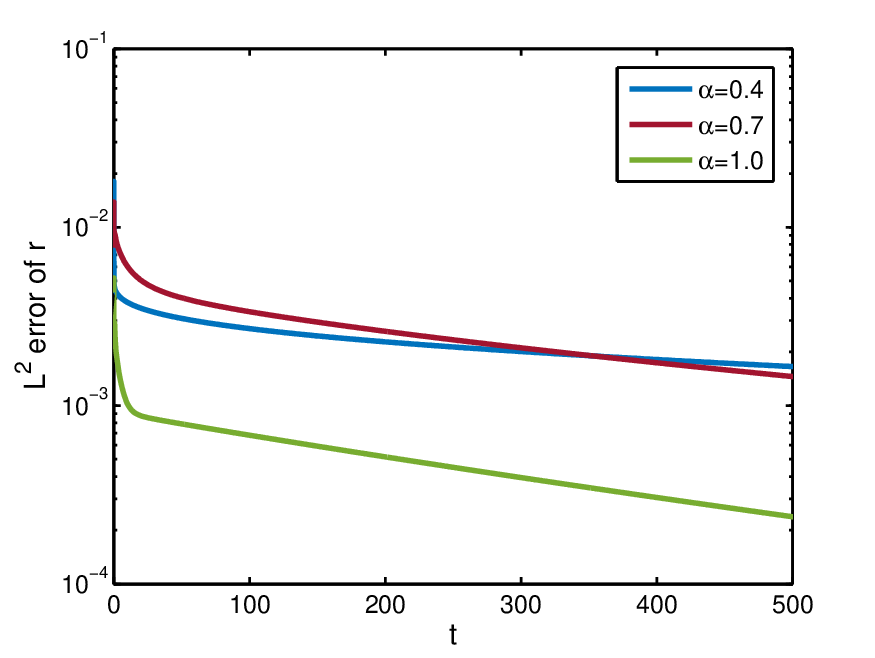}
      \caption{}
      \label{fig:CH_r_eror}
    \end{subfigure}
  \caption{The time-fractional Cahn-Hilliard equation. (a) The original discrete energy and the modified energy. (b) The modified discrete energy and the variation modified energy. (c) The $L^2$-error between $r^{n+1/2}$ and $\phi^{n+1/2}(1-\phi^{n+1/2})-S$. }
  \label{fig:CH}
\end{figure}

\begin{example}\label{example3}
In this example, we consider the time-fractional Swift-Hohenberg equation on the spatial domain $(0,32)^2$ with the following initial condition
    \begin{equation}
        \begin{aligned}
            \phi_0 &= 0.07  - 0.02 \cos\left( \frac{x-12}{32} 2\pi\right) \sin\left(  \frac{y-1}{32} 2\pi\right) \\
            &\quad + 0.02 \cos\left(  \frac{x+10}{32} \pi\right)^2 \sin\left( \frac{y+3}{32}\pi \right)^2   - 0.01 \sin^2\left(\frac{x}{32}4\pi \right) \sin^2\left(\frac{ y-6}{32}4\pi\right) .
        \end{aligned}
    \end{equation}
\end{example}

The parameters are chosen as $M = 0.6, \varepsilon=0.25, g=0.5$, $\delta = -0.25$. We use the adaptive time-stepping strategy with $\tau_{max} = 1, \tau_{\min} = 10^{-5}$, and $\lambda = 10^3$ over time $t=1000$ for $\alpha =0.4, 0.7, 0.9, 1$. Figure \ref{fig:SH} and \ref{fig:SH_Snapshots} plot the evolution of energy and the configurations at several observation times with different $\alpha$, respectively. The results presented are similar to those in Example 2. The fractional-order parameter $\alpha$ significantly affects the coarsening rate of the model. It is worth noting that the error between the auxiliary variable and the original variable shown in Figure (\ref{fig:SH_L2errors_r}) avoids accumulation over time, which further verifies the proposed scheme is consistent and suitable for long-term simulations.

\begin{figure}[H]
  \centering
   \begin{subfigure}[b]{0.48\textwidth}
    \centering
    \includegraphics[width=\textwidth]{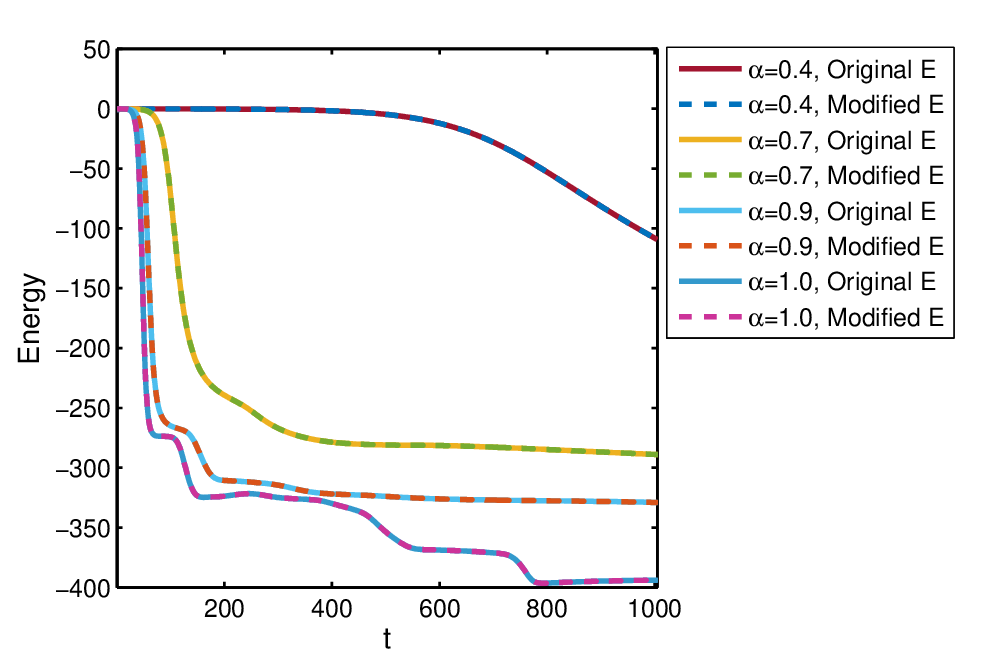}
    \caption{}
    \label{fig:SH_Orig_Energy}
  \end{subfigure}
    \hfill
    \begin{subfigure}[b]{0.48\textwidth}
      \centering
      \includegraphics[width=\textwidth]{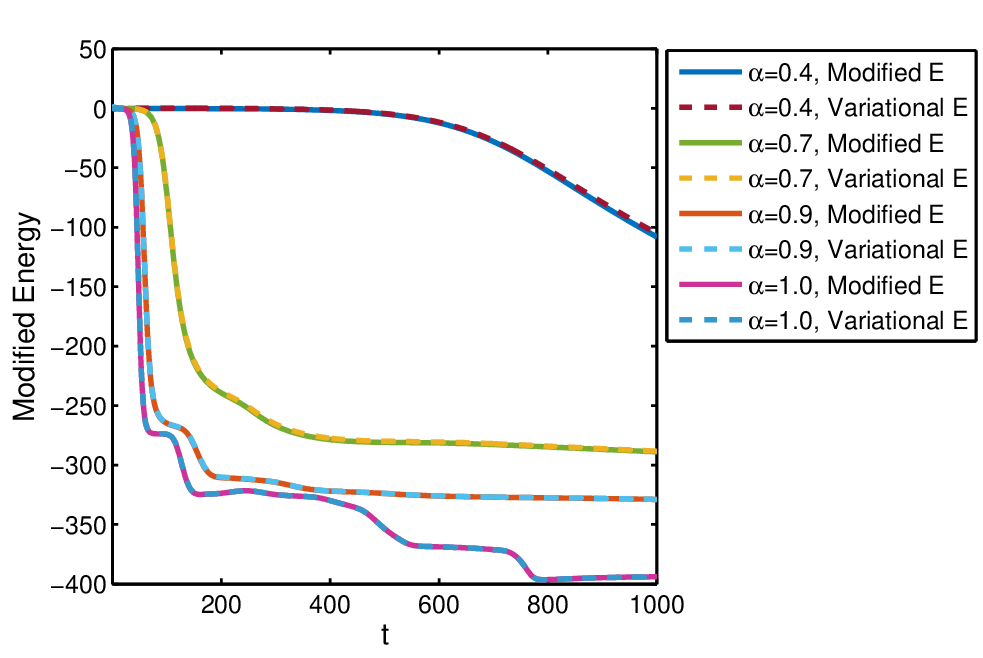}
      \caption{}
      \label{fig:SH_Modi_Energy}
    \end{subfigure}
    \hfill
    \begin{subfigure}[b]{0.48\textwidth}
      \centering
      \includegraphics[width=\textwidth]{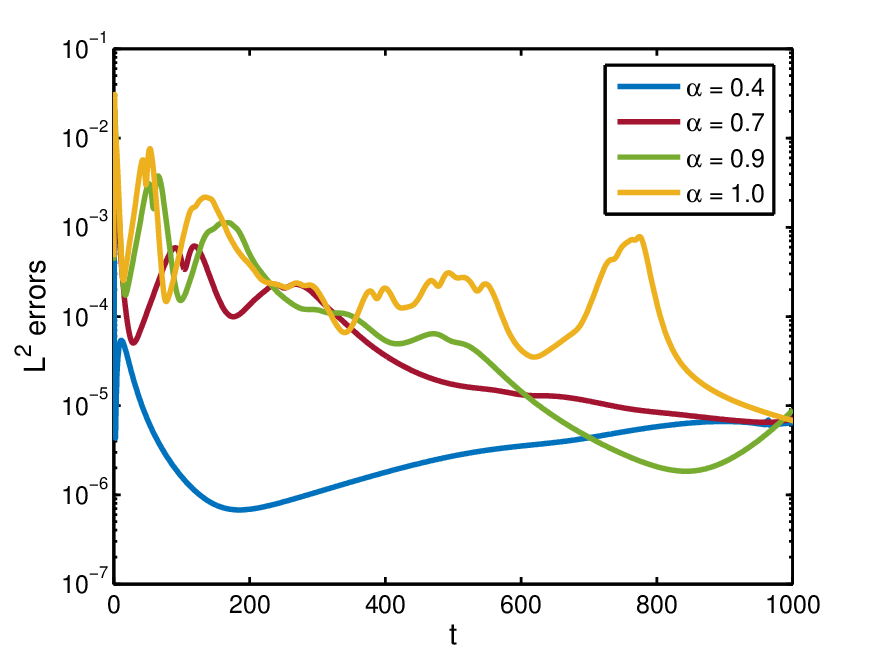}
      \caption{}
      \label{fig:SH_L2errors_r}
    \end{subfigure}
  \caption{The time-fractional Swift-Hohenberg equation. (a) The original discrete energy and the modified discrete energy. (b) The modified discrete energy and the variational discrete energy. (c) The $L^2$-error between the auxiliary variable $r^{n+\frac{1}{2}}$ and the original variable $\frac{1}{2}\left(\phi^{n+\frac{1}{2}}\right)^2    - \frac{g}{3} \phi^{n+\frac{1}{2}} +c_1-S$.}
  \label{fig:SH}
\end{figure}

\begin{figure}[H]
    \centering
    \includegraphics[width=1\textwidth ]{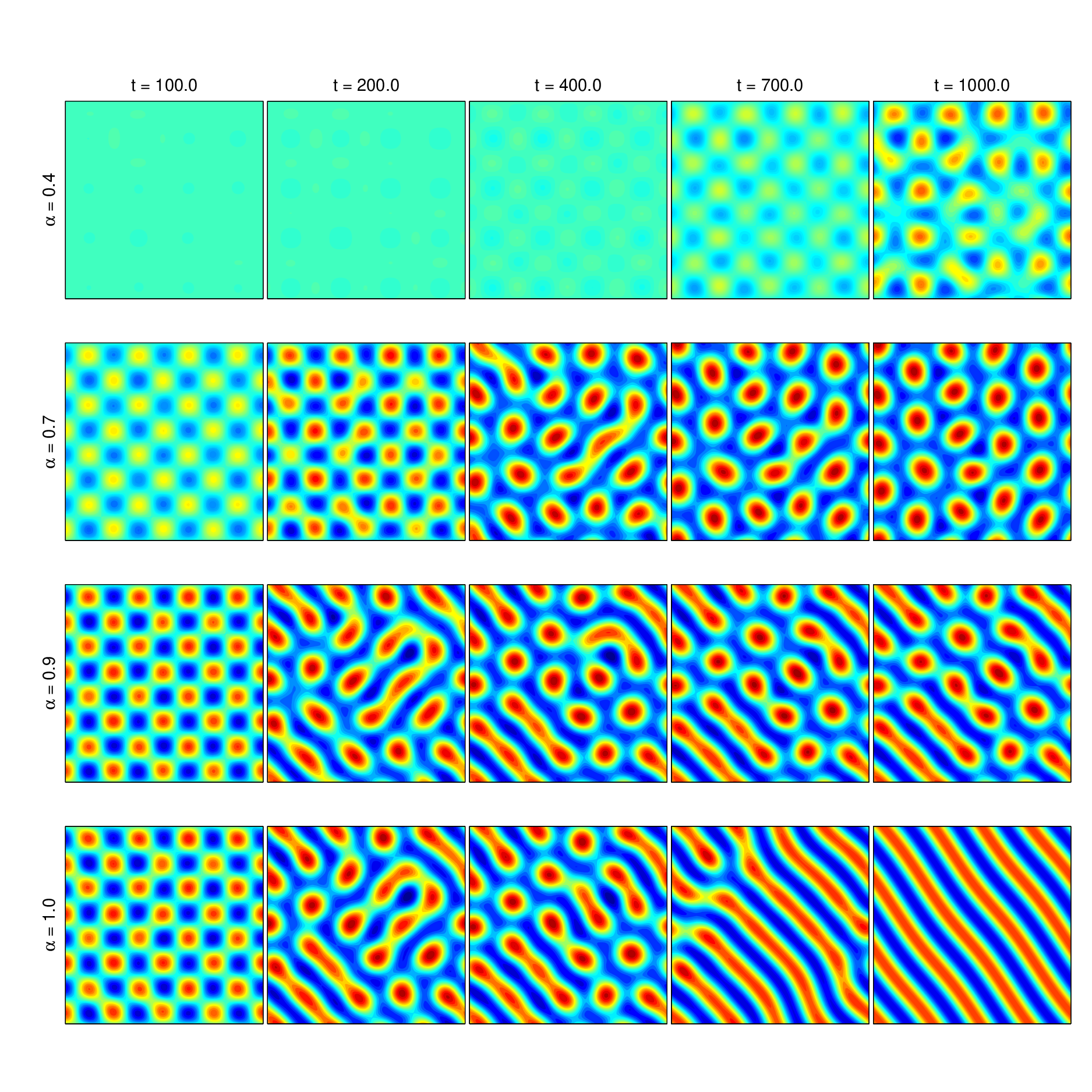}
    \caption{The snapshots of the time-fractional Swift-Hohenberg equation with different $\alpha$.}
    \label{fig:SH_Snapshots}
\end{figure}

\section{Conclusions and remarks}
\label{section5}
In this paper, we develop linear relaxation schemes for solving the time-fractional volume-conserved Allen-Cahn, Cahn-Hilliard, and Swift-Hohenberg equations. Auxiliary variables are introduced to approximate the original nonlinear term. Time-stepping is then carried out numerically by directly solving the algebraic equation associated with the expression of the auxiliary variable, which prevents the error accumulated over time as observed in IEQ and SAV methods, where the algebraic equation is replaced by its time derivative. We prove the energy stability and the variational energy dissipation property of the numerical schemes. Numerical results demonstrate the effectiveness of the proposed numerical schemes. In our future work, we shall consider using the ideas presented in this paper to design efficient structure-preserving schemes for phase-field problems coupled with fluid dynamics.

\section*{Acknowledgments}
H. Yu is partially supported by the China Scholarship Council for one year of research at the University of Dundee. Z. Wang is partially supported by the China Postdoctoral Science Foundation under grant 2024M760239. P. Lin is partially supported by the National Natural Science Foundation of China 12371388, 11861131004.

\bibliographystyle{elsarticle-num}
\bibliography{Ref}
\end{document}